\crefname{hypothesis}{Hypothesis}{Hypotheses}
\title{On the Convergence Analysis of the Decentralized Projected Gradient Method\thanks{Submitted to the editors DATE.
\funding{This work is supported by NRF grants (No. 2016R1A5A1008055) and (No. 2021R1F1A1059671).}}}
\author{Woocheol Choi\thanks{Department of Mathematics, Sungkyunkwan University, 2066 Seobu-ro, Jangan-gu, Suwon-si, Gyeonggi-do 16419, South Korea (\email{choiwc@skku.edu})}
\and Jimyeong Kim\thanks{Stochastic Analysis and Application Research Center (SAARC), Korea Advanced Institute of Science and Technology, 291 Daehak-ro, Yuseong-gu, Daejeon 34141, South Korea (\email{jimyeongkim@kaist.ac.kr})}}
\newcommand*{\rom}[1]{\expandafter\@slowromancap\romannumeral #1@}
\def\x{{\mathbf{x}}}
\def\1{{\mathbf{1}}}
\def\bx{{\bar{\mathbf{x}}}}
\begin{document}

\maketitle

\begin{abstract}
In this work, we are concerned with the decentralized optimization problem:
\begin{equation*}
\min_{x \in \Omega}~f(x) = \frac{1}{n} \sum_{i=1}^n f_i (x),
\end{equation*}
where $\Omega \subset \mathbb{R}^d$ is a convex domain and each $f_i : \Omega \rightarrow \mathbb{R}$ is a local cost function only known to agent $i$.   A fundamental algorithm is the decentralized projected gradient method (DPG) given by
\begin{equation*}
            x_i(t+1)=\mathcal{P}_\Omega\Big[\sum^n_{j=1}w_{ij} x_j(t) -\alpha(t)\nabla f_i(x_i(t))\Big]
\end{equation*} 
where $\mathcal{P}_{\Omega}$ is the projection operator to $\Omega$ and $ \{w_{ij}\}_{1\leq i,j \leq n}$ are communication weight among the agents. While this method has been widely used in the literature, its convergence property has not been established so far, except for the special case $\Omega  = \mathbb{R}^n$. This work establishes new convergence estimates of DPG when the aggregate cost $f$ is strongly convex and each function $f_i$ is smooth. If the stepsize is given by constant $\alpha (t) \equiv\alpha >0$ and suitably small, we prove that each $x_i (t)$ converges to an $O(\sqrt{\alpha})$-neighborhood of the optimal point. In addition, we further improve the convergence result by showing that the point $x_i (t)$ converges to an $O(\alpha)$-neighborhood of the optimal point if the domain is given the half-space $\mathbb{R}^{d-1}\times \mathbb{R}_{+}$ for any dimension $d\in \mathbb{N}$. Also, we obtain new convergence results for decreasing stepsizes. Numerical experiments are provided to support the convergence results.
\end{abstract}

\begin{keywords}
Distributed optimization, Decentralized projected gradient descent, Communication networks
\end{keywords}

\begin{MSCcodes}
68W15, 90C25, 93A14
\end{MSCcodes}

\section{Introduction}\label{sec1}
Let us consider a multi-agent system with $n$ agents that form a connected network and cooperatively solve the following constrained  optimization problem:
\begin{equation}\label{problem}
\min_{x\in \Omega} f(x) =\frac{1}{n} \sum^n_{i=1} f_i(x),
\end{equation} 
where $f_i : \Omega \rightarrow \mathbb{R}$ is a local cost function only known to agent $i\in \mathcal{V}=\{1,2,\cdots,n\}$, and $\Omega\subset\mathbb{R}^d$ denotes a common convex closed set. This problem arises in many applications like engineering problems \cite{BCM, CYRC}, signal processing \cite{Boyd_Gossip, QT} and machine learning problems \cite{BCN, FCG, RB}. The works \cite{NO1, RNV} introduced the Decentralized Projected Gradient (DPG) algorithm for solving the problem \eqref{problem}, and various other algorithms have appeared for the problem \eqref{problem} such as the double averaging algorithm \cite{SBP}, the frank-wolfe type algorithm \cite{WLSM}, the primal-dual algorithm \cite{LCF}, and gradient-tracking type algorithm \cite{DMDT}. We also refer to \cite{RG} where the projected gradient method is utilized to solve the problem \eqref{problem} in a different way. 

In this work, we are concerned with the convergence property of the DPG algorithm described as follows:
\begin{equation}\label{scheme}
x_i(t+1)=\mathcal{P}_{\Omega}\bigg[\sum^n_{j=1}w_{ij}x_j(t) -\alpha(t)\nabla f_i({x}_i(t))\bigg],
\end{equation}
where $x_i (t)$ is the variable of agent $i$ at time instant $t \in \mathbb{N}\cup\{0\}$ and $\mathcal{P}_{\Omega}$ represents the projection of a vector $y$ onto the set $\Omega$, defined as:
\begin{equation}\label{projection}
\mathcal{P}_{\Omega}[y] = \arg\min_{x\in \Omega}\|x-y\|.
\end{equation}
The nonnegative weight scalars $w_{ij}$ are related to the communication pattern among agents in \eqref{problem} (see Section \ref{sec2} for the details). This algorithm is studied for various settings containing stochastic distributed optimization \cite{ALBR, KLBJ, NO3, RNV}, event-triggered communication \cite{KHT, LLSX}, and online problems \cite{AGL, CB, HCM}. In addition, the algorithm has been widely used as a backbone for developing various algorithms such as the decentralized TD learning for the multi-agent reinforcement learning \cite{DMR} and distributed model predictive control \cite{JLYC}. Despite its wide applicability, the convergence properties of the algorithm \eqref{scheme} have not been established well due to the difficulty of handling the projection operator $P_{\Omega}$ in the convergence analysis. To explain the difficulty in the convergence analysis of \eqref{scheme} compared to the case $\Omega = \mathbb{R}^d$, we note that averaging \eqref{scheme} without projection operator gives 
\begin{equation}\label{centralized}
\bar{x}(t+1) = \bar{x}(t) - \frac{\alpha (t)}{n} \sum_{i=1}^n \nabla f_i (x_i (t)),
\end{equation}
where $\bar{x}(t) = \frac{1}{n} \sum_{i=1}^n x_i (t)$. Then, if the stepsize is set to $\alpha \leq \frac{2}{\mu +L}$, one can obtain the following inequality:
\begin{equation}\label{eq-1-4}
\begin{split}
\nonumber\|\bar{x}(t+1)-x_*\|&\leq \Big(1-\frac{\mu L}{\mu+L}\alpha\Big) \|\bar{x}(t) -x_*\| + \frac{L\alpha}{n} \sum_{i=1}^n \|x_i (t) -\bar{x}(t)\|,
\end{split}
\end{equation}
 when $f$ is $\mu$-strongly convex and each $f_i$ is $L$-smooth. This inequality is a major ingredient in the convergence estimate of \eqref{scheme2} in the work \cite{YLY, CK}, but the identity \eqref{centralized} no longer holds for \eqref{scheme} due to the projection. Instead,  we proceed to obtain a sequential estimate of the quantity 
$$
\|\x(t) - \x_*\|^2 = \sum_{i=1}^n \|x_i (t) -x_*\|^2,
$$
which enables us to offset the projection operator efficiently using the contraction property of the projection operator.  As a result, we obtain a convergence result up to an error $O(\sqrt{\alpha})$. 
\subsection{Related works and contributions of this work}\label{sec1-1}
When $\Omega = \mathbb{R}^n$, the algorithm \eqref{scheme} becomes the unconstrained decentralized gradient descent as given by 
\begin{equation}\label{scheme2}
x_i(t+1)= \sum^n_{j=1}w_{ij}x_j(t) -\alpha(t)\nabla f_i({x}_i(t)).
\end{equation}
The convergence estimates of \eqref{scheme2} have been established well in the previous works \cite{NO1, YLY, CK}. In the earlier work \cite{NO1}, for a constant stepsize, Nedi\'{c}-Ozdaglar \cite{NO1} showed that the sequence converges to an $O(\alpha)$-neighborhood of the optimal set. In this work, the authors assume that the gradient $\nabla f_i$ has a uniform bound for $1\leq i \leq n$ and convexity assumption on each function $f_i$. 
In the recent work \cite{YLY}, the convergence result was established for local convex cost functions with $L$-smoothness instead of the uniform boundedness assumption on the gradient. In \cite{YLY}, the authors obtain the uniform boundedness by $L$-smoothness of a local convex function and strongly convex total cost function. They show that the sequence with constant stepsize $O(1/(\mu+L))>\alpha (t) \equiv \alpha >0$ converges to an $O(\alpha)$-neighborhood of the optimal point exponentially fast. This result was extended by the work \cite{CK} to the case with decreasing stepsize of the form $\alpha (t) = c/(t+w)^{p}$ for $0 <p \leq 1$.

As we mentioned earlier, if $\Omega \neq \mathbb{R}^n$, the convergence analysis for \eqref{scheme} becomes more challenging due to the projection operator. In the original works \cite{RNV,I, LQX}, the convergence results of \eqref{scheme} were established under the assumption that the gradient $\nabla f_i$ has a uniform bound for $1\leq i \leq n$. The work \cite{LQX} obtained the convergence rate $O(1/\sqrt{t})$ when the stepsize is chosen as $\alpha (t) = c/t$ for a suitable range of $c>0$.

The uniform boundedness of the gradient assumption was replaced by the $L$-smoothness property in the work \cite{LLSX}. For the algorithm \eqref{scheme} with $L$-smooth and convex cost functions $f_i$,  the authors showed that there exists a uniform bound  $M> 0$ of the gradients $\|\nabla f_i (x_i(t))\|$ for all $t \geq 0$ and $1 \leq i \leq n$ if the stepsize is given by a constant $\alpha >0$ satisfying $\alpha <\frac{1+\lambda_n (W)}{L}$ where $\lambda_n (W)$ denotes the smallest eigenvalue of the mixing matrix $W =\{w_{ij}\}_{1 \leq i,j \leq n}$ (refer to Section 2). For this, the DPG is interpreted as a projected gradient descent (PGD) of a cost defined on the product space $\Omega^n =  \Omega \times \cdots \Omega$ and a classical argument for the PGD was applied.  In addition, assuming also that the aggregate cost $f$ is $\mu$-strongly convex, the work \cite{LLSX} obtained the following convergence estimate:
\begin{equation}\label{eq-1-5}
\begin{split}
\|x_{i}(t+1) -x_*\| & \leq (1-\mu\alpha)^{t+1}\|\bar{x}(0) - x_*\| + \beta^{t+1} \|\x(0)\| + \frac{2\alpha M}{1-\beta} 
\\
&\quad + \sum_{s=0}^{t} (1-\mu\alpha)^{t-s}\Big( \alpha L \gamma_s + \frac{\alpha M}{\sqrt{n}}\Big),
\end{split}
\end{equation}
where $x_*$ denotes a solution of \eqref{problem} and $\gamma_s = \beta^{s} \|x_0\| + \frac{2\alpha M}{1-\beta}$. Here $\beta \in (0,1)$ is the second largest magnitude of the eigenvalues of the mixing matrix $W$ and $M>0$ denotes a uniform bound of the gradients $\|\nabla f_i (x_i (t))\|$ with respect to $t \geq 0$ and $1 \leq  i \leq n$. The above mentioned results are summarized in Table \ref{known results}.{\footnotesize
\begin{table}[ht]
\centering
\label{known results}
\begin{tabular}{|c|c|c|c|c|c| }\cline{1-6}
& {\footnotesize Cost}& {\footnotesize Smooth} & {\footnotesize Learning rate} & {\footnotesize Regret}  &{\footnotesize Rate}  
\\
\hline
&&&&&\\[-1em]
\cite{RNV} & C  & GB &\makecell{$\sum\alpha(t)=\infty$ \\ $\sum\alpha(t)^2<\infty$}& $\|x_i (t) -x_*\|$ & $o(1)$  
\\
&&&&&\\[-1em]
\hline
&&&&&\\[-1em]
\cite{I} & C  &GB &$\alpha (t) = \frac{c}{t^p}$ & \makecell{$\min_{1 \leq i \leq t}$ \\ $f(x_i (t)) - f_* $}& \makecell{$O(\frac{1}{t^{p}})$, $p \in (0, \frac{1}{2})$ \\  $O( \frac{\log t}{\sqrt{t}})$, $p = \frac{1}{2}$ \\ $O(\frac{1}{t^{1-p}})$, $p\in (\frac{1}{2},1)$}  
\\
&&&&&\\[-1em]
\hline
&&&&&\\[-0.8em]
\cite{LQX} & SC  & GB &$\alpha (t) = c/t$ & $\|x_i (t) -x_*\|$ & $O(1/\sqrt{t})$  
\\
&&&&&\\[-0.8em]
\hline
&&&&&\\[-0.8em]
\cite{LLSX} & SC  & LS &$\alpha(t)\equiv \alpha$  & $\|x_i (t) -x_*\|$ & $O(e^{-\textbf{q} t}+\alpha+ \frac{M}{\mu\sqrt{n}})$ 
\\
&&&&&\\[-0.8em]
\hline
&&&&&\\[-0.8em]
\makecell{\footnotesize Thm \\ \footnotesize \ref{thm2-3}} & SC & LS& $\alpha (t) \equiv \alpha $&$ \|x_i(t) - x_*\|$ & $O(e^{-\textbf{q} t}+\sqrt{\alpha})$  
\\ 
&&&&&\\[-1em]
\hline
&&&&&\\[-1em]
\makecell{\footnotesize Thm \\ \footnotesize \ref{thm8-1}} &\makecell{1d\\ example} & LS& $\alpha (t) \equiv \alpha $&$ \|x_i(t) - x_*\|$ & $O(e^{-\textbf{q}t} +{\alpha})$  
\\ 
&&&&&\\[-1em]
\hline
&&&&&\\[-0.8em]
\makecell{\footnotesize Thm \\ \footnotesize \ref{thm-6-10}} &\makecell{SC\\ {\footnotesize$\Omega=\mathbb{R}^{d}\times \mathbb{R}^+$}} & LS& $\alpha (t) \equiv \alpha $&$ \|x_i(t) - x_*\|$ & $O(e^{-\textbf{q}t} +{\alpha})$  
\\ 
&&&&&\\[-1em]
\hline
&&&&&\\[-0.8em]
\makecell{\footnotesize Thm \\ \footnotesize \ref{thm2-6}} & SC & LS& $\alpha (t) = \frac{v}{(t+w)^{p}}$&$ \|x_i(t) - x_*\|$ & \makecell{ $O( t^{-p/2})$, $p\in(0,1]$}   
\\ 
\hline
\end{tabular}
\vspace{0.1cm}
\caption{Convergence results for DPG. Here C and SC mean that convex and $\mu$-strongly convex, respectively and GB and LS mean that gradient bounded, i.e. $\|\nabla f_i\|_\infty <\infty$, and L-smooth, respectively. The constant $M$ means a uniform bound of a sequence $\|\nabla f_i(x_i(t))\|.$ The  $\textbf{q}>0$ is a constant determined by the properties of the cost functions and the mixing matrix.}
\end{table}
}
Here we note that the right-hand side of \eqref{eq-1-5} involves the following term
\begin{equation*}
\sum_{s=0}^{t} (1-\mu\alpha)^{t-s} \frac{\alpha M}{\sqrt{n}} =\frac{M}{\mu\sqrt{n}} \Big[ 1- (1-\mu\alpha)^{t+1}\Big],
\end{equation*}
where we used the formula $\sum^t_{s=0} r^s = (1-r^{t+1})/(1-r)$ for $r \in (0,1)$. This term converges to $\frac{M}{\mu\sqrt{n}}$ as the number of iterations $t$ goes to infinity. This limit is independent of the stepsize $\alpha >0$. Therefore, the right hand side of above estimate \eqref{eq-1-5} in the limit $t\rightarrow \infty$ involves the term $\frac{M}{\mu\sqrt{n}}$.  However, this convergence estimate is not as strong as the estimate for \eqref{scheme2} by the work in \cite{YLY} which showed that the sequence of \eqref{scheme2} with constant stepsize $\alpha (t) \equiv \alpha >0$ (below a certain threshold) converges exponentially fast to an $O(\alpha)$-neighborhood of the optimal point. Having these results, it is natural to pose the following question:

\medskip 

\noindent \emph{\textbf{Question}: Does the algorithm \eqref{scheme} with constant stepsize $\alpha >0$ converge  to a solution $x_*$ up to an $O(\alpha^{\kappa})$ error for some $\kappa>0$?}

\medskip 

\noindent It is worth mentioning that this fundamental question is still open and has not been resolved as of now. In the first main part of this work, we show that the convergence property of this question holds with $\kappa=1/2$ if the total cost function $f$ is $\mu$ strongly convex and each local cost function $f_i$ is $L_i$ smooth. 
For this, we derive two sequential inequalities with the help of the contraction property of the projection operator (see Proposition \ref{prop-3-1} and Proposition \ref{prop-3-2}). Based on these inequalities, we will also establish new convergence results of the algorithm \eqref{scheme} with decreasing stepsize. 

The second main part of this work is show that the above question holds true with $\kappa =1$ for a specific example in dimension one using the explicit formula of the algorithm \eqref{scheme}. Furthermore we verify that the property with $\kappa=1$ for a general class of functions on the half-space $\Omega =\mathbb{R}^{n-1}\times \mathbb{R}_{+}$ with assuming that the optimizer $x_*$ lies on the boundary $\partial \Omega$. For this, we develop an argument which exploits the convergence property in the normal direction to the boundary of $\Omega$ and in the perpendicular directions to the normal direction separately. We believe that this argument can be extended further to more general domains such as the smooth convex domains and $\mathbb{R}^{n-k}\times \mathbb{R}_{+}^k$ for any $1\leq k \leq n$.


\subsection{Notations and organizations}
 Before stating the results, we introduce some constants used to state and prove the results. We use $\|\cdot\|$ throughout the paper to denote the Euclidean norm. We denote $\mathbf{x}(t)$, $\bar{\mathbf{x}}(t)$ and $\mathbf{x}_*\in\mathbb{R}^{d\cdot n}$ by
\begin{equation*}
\begin{split}
\mathbf{x}(t) =[x_1(t), x_2(t)\cdots,x_n(t)]^T,\
 \bar{\mathbf{x}}(t) =[\bar{x}(t),\cdots \bar{x}(t)]^T,\
 \mathbf{x}_* = [ x_*, \cdots, x_*]^T.
 \end{split}
\end{equation*}
where $\bar{x}(t) = \frac{1}{n}\sum^{n}_{i=1} x_i(t)$. We note that
\begin{equation*}
\|\mathbf{x}(t)-\mathbf{\bar{x}}(t)\|^2=\sum^n_{i=1}\|x_i(t)-\bar{x}(t)\|^2 \ \text{and} \ \|\mathbf{x}(t)-\mathbf{x}_*\|^2=n \|\bar{x}(t)-x_*\|^2.
\end{equation*}
In addition, we use the following constants throughout the paper.

\noindent \quad $\bullet$ {$R_D^*$ is defined as $\max_{1\leq i \leq n}\| \nabla f_i(x_*)\|^2.$}

\noindent \quad {$\bullet$ $R_s$ is a uniform bound of a sequence $\|\x(t) - \x_*\|^2$ (see Theorem \ref{thm-2-11}).}

\noindent \quad $\bullet$ $\beta$ is the second largest magnitude of eigenvalues of the mixing matrix $W$ (see Assumption \ref{ass-1-1}).

\noindent \quad $\bullet$ $L>0$ is from the $L$-smoothness of  the local costs $f_i$ (see Assumption \ref{LS}).

\noindent \quad $\bullet$ $\mu >0$ is from the $\mu$-strongly convexity of the total cost $f$ (see Assumption \ref{sc}).


The rest of this paper is organized as follows. Section \ref{sec2} presents the assumptions used throughout the paper and we introduce our main result. Section \ref{sec4} contains the proofs of our main results, which are based on sequential estimates obtained in Section \ref{sec4-1}. Section \ref{sec5} provides an optimal convergence result for specific examples, such as the one-dimensional and half-space examples. Finally, in Section \ref{sec6}, we present numerical experiments to support our main theorems.

\section{Assumptions and main results}\label{sec2}
In this section, we state the assumptions on the total and local cost functions in \eqref{problem} and communication patterns among agents. We are interested in  \eqref{problem} when the local cost functions and the total cost functions satisfy the following strong convexity and smoothness assumption.
\begin{ass}\label{LS}
For each $i\in\{1,\cdots n\}$, the local cost function $f_i$ is $L$-smooth for some $L>0$, i.e., for any $x, y \in \Omega$ we have
\begin{equation*}\label{L-smooth}
\| \nabla f_i(x) - \nabla f_i(y)\| \leq L\|x-y\|\quad \forall~x,y \in \Omega.
\end{equation*}
\end{ass}
Assumption \ref{LS} implies that the total cost function $f(\cdot)=\frac{1}{n}\sum^{n}_{i=1}f_i(\cdot)$ is also $L$-smooth. 
\begin{ass}\label{sc}
 The total cost function $f(\cdot)=\frac{1}{n}\sum^{n}_{i=1}f_i(\cdot)$ is $\mu$-strongly convex for some $\mu>0$, i.e., for any $x,y\in\Omega$, we have
\begin{equation*}\label{strong}
f(y) \geq f(x) + (y-x)\nabla f(x) + \frac{\mu}{2}\|y-x\|^2.
\end{equation*}
\end{ass}
Under this assumption, the function $f$ has a unique optimizier $x_* \in \Omega$ (see Theorem 5.25 in \cite{B1}). 

In decentralized optimization, local agents share information through a communication network characterized by an undirected graph $\mathcal{G}=(\mathcal{V},\mathcal{E})$. Here, each node in $\mathcal{V}$ represents an agent, and an edge ${i,j} \in \mathcal{E}$ indicates that agent $i$ can send messages to agent $j$. The graph $\mathcal{G}$ is assumed to satisfy the following assumption. 
\begin{ass}\label{graph}
The communication graph $\mathcal{G}$ is fixed and connected. By `fixed', we mean that the graph $\mathcal{G}$ does not depend on the time $t$,  and by `connected', we mean that for any $i$ and $j$ in $\mathcal{E}$, there always exists a sequence of edges connecting them.
\end{ass}
We define the mixing matrix $W=[w_{ij}]_{1\leq i,j\leq n}$ as follows. The nonnegative weight $w_{ij}$ is given for each communication link $\{i, j\}\in \mathcal{E},$ where $w_{ij}\neq0$ if $\{i,j\}\in\mathcal{E}$ and $w_{ij} = 0$ if $\{i,j\}\notin\mathcal{E}$. In this paper, we make the following assumption on the mixing matrix $W$. 
\begin{ass}\label{ass-1-1}
The mixing matrix $W = \{w_{ij}\}_{1 \leq i,j \leq n}$ is symmetric and doubly stochastic. In addition $w_{ii}>0$. The network is connected and the weight matrix $W$ satisfies $\textrm{null}(I-W) = \textrm{span}\{1_n\}$. 
\end{ass}
Without loss of generality, we arrange the eigenvalues of $W$ to satisfy
\begin{equation*}
1=|\lambda_1 (W)| > |\lambda_2 (W)| \geq \cdots \geq |\lambda_n (W)| \geq 0,
\end{equation*}
and it is well-known that we have $\beta:= |\lambda_2 (W) | <1$ under Assumption \ref{ass-1-1} (see Section \rom{2}.B in \cite{GN}).

In this section, we present the results: consensus achievement, and convergence rate and error of the decentralized projected gradient descent scheme \eqref{scheme}. Before presenting the main results, we establish the conditions for uniform boundedness of the sequence $\{x_i (t)\}_{t \geq 0}$ in the sense that $\|\mathbf{x}(t)-\mathbf{x}_*\|^2$ is uniformly bounded for all $t\geq0$. This uniform boundedness property is crucial to obtain our main results.
\begin{thm}[Conditions for uniform bounedness]\label{thm-2-11}
There exists a constant $R_s >0$ such that
\begin{equation*}
\|\mathbf{x}(t)-\x_*\|^2 \leq R_s 
\end{equation*}
holds for all $t\geq 0$ if at least one of the following statements holds true:
\begin{enumerate}
\item $\Omega$ is bounded (no restriction on the stepsize).
\item Each local cost function $f_i$ is convex and satisfies Assumption \ref{LS}. In addition, the stepsize is constant, i.e., $\alpha (t) \equiv \alpha$, satisfying $\alpha \leq \frac{1 + \lambda_n (W)}{L}$.  
\item Each local cost function $f_i$ satisfies Assumption \ref{LS} and the total cost function $f$ satisfies Assumption \ref{sc}. In addition, the stepsize $\{\alpha (t)\}_{t \geq 0}$ is non-increasing and satisfies
$$
\alpha(t) < \min\bigg\{Z, \frac{\mu}{4c_1}, \frac{2}{L+\mu}\bigg\}.
$$ 
 Here   we have set the positive constant   $Z$ by  
$$
Z := \frac{1}{2c_3} \Big[ - \Big(c_4 + \frac{\mu}{4}\Big) + \sqrt{\Big(c_4 +\frac{\mu}{4}\Big)^2 + 4c_3 (1-\beta)}\Big]\leq1-\beta,
$$
where the constants $c_1,c_2,c_3,c_4$ are defined as follows:
\begin{equation*}\label{constants}
c_1 := \frac{3L^2 }{1-\beta},\ c_2 := \frac{3n R_D^*}{1-\beta},\ c_3 := c_1 + L^2 , \ c_4 := \frac{4L^2}{\mu}.
\end{equation*}
\end{enumerate}
\end{thm}

While Theorem \ref{thm-2-11} provides conditions for ensuring uniform boundedness, we introduce the following uniform boundedness assumption because it may hold for larger ranges of $\alpha(t)$ than that guaranteed by Theorem \ref{thm-2-11}. Proving a sharper range of $\alpha (t)$ for the uniform boundedness property would be an interesting future work.
\begin{ass}\label{ass-5}
There exists a constant $R_s>0$ such that
\begin{equation*}\label{eq-2-8}
\|\mathbf{x}(t)-{\mathbf{x}_*}\|^2 \leq R_s 
\end{equation*}
holds for all $t\geq 0$.
\end{ass}

\subsection{Consensus results}\label{sec3-1}
In centralized optimization, it suffices to demonstrate that the sequence generated by \eqref{centralized} converges to the optimal solution of \eqref{problem} because the central coordinate controls all agents simultaneously. In contrast, decentralized optimization involves each agent generating its own sequence and only sharing its information with neighboring agents. Therefore, we also need to show that each sequence generated by \eqref{scheme} converges to the same point, in which case we say the consensus is achieved. The following theorem states the consensus results based on the estimates for the consensus error $\|\mathbf{x}(t) - \mathbf{\bar{x}}(t)\|^2$.
\begin{thm}[Consensus]\label{thm2-5}
Suppose that Assumptions \ref{LS}, \ref{sc}, \ref{graph} and \ref{ass-5} hold. If a non-increasing stepsize $\{\alpha(t)\}_{t\geq0}$ satisfies $\alpha(t)\leq \frac{2}{L+\mu}$, then we have the following consensus estimates.
\begin{itemize}
\item[Case 1.] For a decreasing stepsize, we have
\begin{equation*}
\|\mathbf{x}(t)-\bar{\mathbf{x}}(t)\|^2\leq \beta^t \|\mathbf{x}(0)-\bar{\mathbf{x}}(0)\|^2 + \frac{R_c \alpha (t)^2}{(1-\beta)^2},
\end{equation*}
where{
\begin{equation}\label{rc}
R_c:=3(L^2R_s + nR_D^*)\cdot\sup_{s\geq0}\frac{\alpha(0)^2\beta^s + \alpha([s/2])^2}{\alpha(s)^2}.
\end{equation}}
\item[Case 2.] For a constant stepsize, i.e. $\alpha(t)\equiv \alpha$, we have
\begin{equation*}
\|\mathbf{x}(t)-\bar{\mathbf{x}}(t)\|^2\leq \beta^t \|\mathbf{x}(0)-\bar{\mathbf{x}}(0)\|^2 + \frac{3(L^2R_s + nR_D^* )\alpha^2}{(1-\beta)^2}.
\end{equation*}
\end{itemize}
\end{thm}
Theorem \ref{thm2-5} demonstrates that the consensus is reached exponentially fast up to an $O(\alpha (t))$ error. 
\subsection{Convergence results: Constant stepsize}\label{sec3-2}
According to Theorem \ref{thm2-5}, demonstrating the convergence of the consensus point $\bar{x}(t)$ to the optimal point $x_*$ is sufficient to establish our convergence results. The following convergence result holds when the stepsize is given by a constant. 
\begin{thm}[Convergence for constant stepsize]\label{thm2-3} 
Suppose that Assumptions \ref{LS}, \ref{sc}, \ref{graph} and \ref{ass-5} hold. If the stepsize is given by a constant  $\alpha >0$ such that $\alpha\leq\frac{2}{L+\mu}$, then
we have
\begin{equation*}
\|\bar{\mathbf{x}}(t) - \mathbf{x}_*\|^2 \leq \Big(1 - \frac{\mu \alpha}{2}\Big)^{t} \|\bar{\mathbf{x}}(0) - \mathbf{x}_*\|^2 + G_1\left(\left(1-\frac{\mu\alpha}{2}\right)^{t-1} + \beta^{\frac{t-1}{2}}\right) + G_2\alpha.
\end{equation*}
Here {the constant $G_1$ and $G_2$ are defined as
\begin{equation*}
\begin{split}
G_1 &= \frac{2\left(c_3\alpha^2 + c_4\alpha + \beta\right)R_s}{\mu\alpha} \\
G_2 &= \frac{2}{\mu}\left(\left(c_3\alpha^2 + c_4\alpha + \beta\right)\frac{3(L^2 R_s + nR_D^*)}{\left(1-\beta\right)^2} + c_1 R_s + c_2\right)
\end{split}
\end{equation*}}
where the constants {$c_1$, $c_2$, $c_3$ and $c_4$ are defined in Theorem \ref{thm-2-11}. }
\end{thm}
\begin{remark}\label{rmk-2-9}
Note that using $\sum^n_{i=1}\|a_i+b\|^2 = \sum^n_{i=1}\|a_i\|^2+\|b\|^2$ if $\sum^n_{i=1} a_i=0$ for any $a_i,b\in\mathbb{R}^d$, it follows that
\begin{equation}\label{eq-3-3}
\|\x(t) - \x_*\|^2 = \| \x(t) - \bx(t)\|^2 + \| \bx(t) - \x_*\|^2.
\end{equation} 
In view of this formula, the results of Theorem \ref{thm2-3} and Theorem \ref{thm2-5} together gives the convergence of $\|\x(t)-\x_*\|^2$. Consequently, the sequence generated by \eqref{scheme} converges exponentially fast to an $O(\sqrt{\alpha})$-neighborhood of the optimal point.
\end{remark}

We recall from \cite{YLY} that the sequence of the algorithm \eqref{scheme} on the whole space $\mathbb{R}^d$ converges to an $O(\alpha)$-neighborhood of $x_*$. This naturally leads us to pose the following question.

\medskip

\noindent \emph{\textbf{Question}: Is the convergence error $O(\sqrt{\alpha})$ in Theorem \ref{thm2-3} optimal? or can we improve the convergence error to $O(\alpha)$?}

\medskip

We give a partial answer to this question. Namely, we will establish the convergence result up to an $O(\alpha)$-neighborhood when the domain is given as the half-space  $\Omega = \{ (\tilde{x},x[d])\, |\, \tilde{x}\in \mathbb{R}^{d-1}, x[d] \geq 0\}\subseteq \mathbb{R}^d$. Here $x[k]\in\mathbb{R}$ denotes the $k$-$th$ component of the vector $x\in\mathbb{R}^d$.    

Let $x_* = (\tilde{x}_*, x_*[d])\in \mathbb{R}^{d}$ be a solution of \eqref{problem}, i.e. $x_*=\arg\min_{x\in\Omega} f(x)$. If the optimal point $x_*$ is an interior point of $\Omega$, then the projection of the DPG \eqref{scheme} becomes negligible if the point $\{x_i (t)\}_{i=1}^n$ are close enough to the point $x_*$ and $\alpha >0$ is chosen reasonably small, and so the DPG is reduced to the decentralized gradient descent \eqref{scheme2}. Thus the results of \cite{YLY, CK} on the algorithm \eqref{scheme2} can be applied to obtain the convergence result up to an $O(\alpha)$-neighborhood. Therefore, we focus only on the more intricate case that $x_*$ is on the boundary of $\Omega$. Namely, we impose the following assumption.
\begin{ass}\label{ass-5-21}
The minimizer of $f$ is on the boundary of $\Omega$, specifically, $x_* = (\tilde{x}_*, 0) $ with some $\tilde{x}_* \in \mathbb{R}^{d-1}$.   In addition each local cost function $f_i$ is $L$-smooth, and the minimizer $x_* = (\tilde{x}_*, 0)$ satisfies $\partial_d f(x_*) = \frac{1}{n}\sum^{n}_{i=1} \partial_d f_i(x_*) \geq \omega>0$. Here $\partial_kf$ denotes the $k$-$th$ component of the gradient $\nabla f$.
\end{ass}
 
\begin{thm}\label{thm-6-10}Suppose that the domain $\Omega$ is the half space $\mathbb{R}^{d-1}\times \mathbb{R}_{+}$ with any dimension $d \geq 1$ and  Assumption \ref{ass-5-21} holds. Suppose also that Assumptions \ref{LS}, \ref{sc}, \ref{graph} and \ref{ass-5} hold. If the stepsize is given by a constant  $\alpha >0$ such that $\alpha\leq\frac{2}{L+\mu}$, we have
\begin{equation}
\nonumber\underset{t \rightarrow \infty}{\textrm{limsup}}  \,\|x_k(t) - x_*\|  = O(\alpha)\quad \forall~1\leq k \leq n.   
\end{equation}
\end{thm}
 To achieve this result, we develop a new framework to handle the projection operator in an explicit way. The framework will be also useful to obtain the same result for more general domains $\Omega$ such as $\mathbb{R}^{d-k}\times \mathbb{R}_{+}^{k}$ and smooth convex domains. In \mbox{Section \ref{sec5}} we will also bring a one-dimensional for which we show that the algorithm \eqref{scheme} converges to an $O(\alpha)$ neighborhood of the optimal point by using the explicit formula of the DPG. This simple example represents the difficulty of handling the projection operator and supports the novelty of the proof of Theorem \ref{thm-6-10} obtained for general cost functions. 
 
\subsection{Convergence results: Diminishing stepsize} In this section, we consider the DPG with the decreasing stepsize of the form $v/(t+w)^p$ for some values $v>0$, $w\geq 0$ and $p \in (0,1]$, generalizing the stepsize considered in the previous works \cite{RNV, I}. The aim of this section is to obtain new convergence results of the DPG with the stepsize $\alpha (t)=v/(t+w)^p$.

 Before stating the results, we introduce {the following constants $G_3$ and $G_4$:
\begin{equation}\label{G_const}
\begin{split}
G_3 & =  \Big( c_3 \alpha (0)^2 +  c_4 \alpha (0) + \beta\Big)R_s 
\\
G_4 & = ( c_3 \alpha (0)^2 +  c_4 \alpha (0) + \beta) \frac{2R_c}{(1-\beta)^2}  + c_1 R_s +  c_2,
\end{split}
\end{equation}}
where {$c_1,\ c_2,\ c_3, \ c_4$ are defined in Theorem  \ref{thm2-3}} and $R_c$ is defined in Theorem \ref{thm2-5}.

Based on this observation, we consider a diminishing stepsize and provide the exact convergence results. Specifically, we use $\alpha(t) = v/(t+w)^p$ as the stepsize.
\begin{thm}[Convergence for diminishing stepsize]\label{thm2-6}
Suppose that Assumptions \ref{LS}, \ref{sc}, \ref{graph} and \ref{ass-5} hold.   Let $p\in(0,1]$ and assume that $\alpha(t) = \frac{v}{(t+w)^p}$ with $v, w>0$ satisfying
$$  
\alpha(0)=\frac{v}{w^p} \leq  \frac{2}{L+\mu}. 
$$
Let $\rho_1$ and $\rho_2$ be defined by 
\begin{equation}\label{rho1rho2}
{\rho_1 = \Big( \frac{w+1}{w}\Big)^{2p} ~\text{and}~ \rho_2 = \sup_{t \geq 0} \beta^{t}/\alpha (t)^2}
\end{equation}
Then we have the following results.
\begin{enumerate}
\item Assume that $p \in (0,1)$. Then we have
\begin{equation*}
  \|\bar{\mathbf{x}}(t) - \mathbf{x}_*\|^2 
\leq \frac{2e \rho_1 \Big( \rho_2 G_3 + G_4\Big)  v}{\mu} ([t/2]+w-1)^{-p} + \mathcal{R}_1(t) + \mathcal{R}_2(t),  
\end{equation*} 
where
\begin{equation*}
\begin{split}
\mathcal{R}_1(t) &= e^{-\sum^{t-1}_{s=0}\frac{\mu v}{2(s+w)^p}}\|\bar{\mathbf{x}}(0) - \mathbf{x}_*\|^2,\\
\mathcal{R}_2(t) &= \rho_1\Big( \rho_2 G_3 + G_4\Big) v^2 e^{-\frac{\mu vt}{4(t+w)^p}}\sum^{[t/2]-1}_{s=1}\frac{1}{(s+w)^{2p}}.
\end{split}
\end{equation*}
\item Assume that $p=1$ and choose $v>0$ such that $  \mu v/2 >1$. Then we have
\begin{equation}\label{eq-3-21}
 \|\bar{\mathbf{x}}(t) - \mathbf{x}_*\|^2\leq \left(\frac{w}{t+w}\right)^{ \mu v/2 }  \|\bar{\mathbf{x}}(0) - \mathbf{x}_*\|^2  + \mathcal{R}_3(t),
\end{equation}
where
\begin{equation*}
\mathcal{R}_3(t)= \frac{\rho_1}{( \mu v/2 ) -1} \Big( \frac{w+1}{w}\Big)^{ \mu v/2  } \frac{ \Big( \rho_2 G_3 + G_4\Big)  v^2}{(t+w-1)}.
\end{equation*}  
\end{enumerate}
\end{thm}
\begin{remark}
The constant $\rho_2 > 0$ defined in the above result is used in the convergence analysis. Namely, we shall obtain an intermediate inequality whose bound involves a term of the form $d_1 \beta^t + d_2 \alpha (t)^2$ for some constants $d_1 >0$ and $d_2 >0$, and we will bound the terms by $(\rho_2 d_1 + d_2)\alpha (t)^2$. 
\end{remark}
\begin{remark}\label{rmk-2-13}
Similarly as in the constant stepsize case (see Remark \ref{rmk-2-9}), the results of Theorem \ref{thm2-6} implies the convergence of $\|\x(t)-\x_*\|^2$. Specifically, for the case $p \in (0,1)$, we easily see that for any fixed $N>0$, there exists a constant $C_N>0$ independent of $t\geq0$ such that
$$
\mathcal{R}_1(t) + \mathcal{R}_2(t) \leq C_Nt^{-N}.
$$
Therefore, the DPG algorithm \eqref{scheme} converges to the optimal point with a rate of $O(t^{-p/2})$. For the case $p=1$, we observe that the convergence rate of the first term on the right-hand side of \eqref{eq-3-21} is $O(t^{-\mu v/2})$, while that of $\mathcal{R}_3(t)$ is $O(t^{-1})$. Since $\mu v/2>1$ as chosen in Theorem \ref{thm2-6}, we conclude that the sequence converges to the optimal point with a rate of $O(t^{-1/2})$.
\end{remark}

\begin{remark}
From Theorem \ref{thm2-3} and Theorem \ref{thm-6-10}, we know that the DPG with constant stepsize $\alpha >0$ converges exponentially fast to a neighborhood of the optimizer $x_*$ whose size is comparable to a power of $\alpha$. Based on this result, if we want the DPG to converge to the optimizer $x_*$ without error, then one strategy is to repeat decreasing the stepsize after suitable numbers of iterations. Optimizing this strategy and comparing its performance with the decreasing case would be interesting work.  
\end{remark}

\subsection{The main ideas of this work} 
Here we explain the main ideas of this paper, mainly for achieving the $O(\sqrt{\alpha})$-convergence result of \mbox{Theorem \ref{thm2-3}} and its improvement by Theorem \ref{thm-6-10} on the $O(\alpha)$-convergence result for the half-space. Before the explanation, we also recall the argument in the previous work \cite{LLSX} which attains an $O(1)$-convergence result. 

\subsubsection{The argument of [23]}One may see that the proof in the work \cite{LLSX} begins with writing the DPG in the following way 
\begin{equation}\label{eq-1-6}
x_k (t+1) = \sum_{j=1}^n w_{kj} x_j (t) - \alpha \nabla f_k (x_k (t)) + \phi_k (t)
\end{equation}
where $\phi_i(t)$ is the difference between DGD and DPG defined as follows:
$$
\phi_i(t) = \underbrace{{\sum^n_{j=1}w_{ij}x_j(t) -\alpha(t)\nabla f_i({x}_i(t))}}_{\text{DGD}} - \underbrace{\mathcal{P}_{\Omega}\bigg[\sum^n_{j=1}w_{ij}x_j(t) -\alpha(t)\nabla f_i({x}_i(t))\bigg]}_\text{DPG}.
$$
Averaging \eqref{eq-1-6} for $1\leq k \leq n$ one has
\begin{equation}\label{eq-1-7}
\bar{x}(t+1) =\bar{x}(t) - \frac{\alpha}{n} \sum_{i=1}^n \nabla f_k (x_i (t)) + \frac{1}{n} \sum_{i=1}^n \phi_i (t).
\end{equation}
The work \cite{LLSX} treated the last term of \eqref{eq-1-7} as an additional error term and bound it as 
\begin{equation}\label{eq-1-8}
\Big\| \frac{1}{n} \sum_{i=1}^n \phi_i (t)\Big\| =O(\alpha).
\end{equation}
Then, the contraction property of the projection operator (refer to Lemma \ref{lem-1-2}) is applied to achieve the following estimate
\begin{equation}\label{eq-1-9}
\begin{split}
&\|\bar{x}(t+1)-x_*\|
\\
&=\left\|\mathcal{P}_{\Omega}[\bar{x}(t+1)] - \mathcal{P}_{\Omega}\left[x_* - \frac{\alpha}{n} \sum_{i=1}^n \nabla f_i (x_*)\right]\right\|
\\
&\leq \Big\| \bar{x}(t+1) - \Big( x_* - \frac{\alpha}{n} \sum_{i=1}^n \nabla f_i (x_*)\Big)\Big\|
\\
& = \Big\| \bar{x}(t) - \frac{\alpha}{n} \sum_{i=1}^n \nabla f_i (x_i (t)) - \Big( x_* - \frac{\alpha}{n} \sum_{i=1}^n \nabla f_i (x_*)\Big) + \frac{1}{n} \sum_{i=1}^n \phi_i (t) \Big\|.
\end{split}
\end{equation}
Using the $L$-smooth property of the costs and the triangle inequality, one may estimate the last part of \eqref{eq-1-9} as follows:
\begin{equation}\label{eq-1-10}
\begin{split}
&\|\bar{x}(t+1)-x_*\|
\\
&\leq \Big\| \bar{x}(t) - \frac{\alpha}{n} \sum_{i=1}^n \nabla f_i (\bar{x}(t)) - \Big( x_* - \frac{\alpha}{n} \sum_{i=1}^n \nabla f_i (x_*)\Big)\Big\| + \Big\| \frac{1}{n} \sum_{i=1}^n \phi_i (t) \Big\|
\\
&\quad + \frac{L\alpha}{n} \sum_{i=1}^n \|\bar{x}(t) -x_i (t)\|.
\end{split}
\end{equation}
Now we use the strongly convexity of $f$ and \eqref{eq-1-8} to estimate the right hand side of \eqref{eq-1-10} as follows:
\begin{equation}\label{eq-1-11} 
\begin{split}
&\|\bar{x}(t+1)-x_*\|
\leq (1-c \alpha) \|\bar{x}(t)-x_*\|+ O(\alpha) + \frac{L\alpha}{n} \sum_{i=1}^n \|\bar{x}(t) -x_i (t)\|,
\end{split}
\end{equation}
provided that $\alpha >0$ is chosen less than a threshold value and $c>0$ is a constant related to the strongly convexity of $f$. We may observe that the estimate \eqref{eq-1-11} is less effective due to the fact that there is a $O(\alpha)$ term in the bound while the coefficient for the contraction of $\|\bar{x}(t)-x_*\|$ is $(1-c\alpha)$. More precisely, even if we neglect the consensus error $\frac{L\alpha}{n} \sum_{i=1}^n \|\bar{x}(t) -x_i (t)\|$ in \eqref{eq-1-11} and consider a positive sequence $\{z_t\}_{t \geq 0}$ satisfying for $t\geq 0$:
\begin{equation}\label{eq-1-12}
z_{t+1} \leq (1-c\alpha) z_t + C\alpha
\end{equation}
for some $c >0$ and $C>0$, then the tight bound of $z_t$ using \eqref{eq-1-12} is given as
\begin{equation}
\begin{split}
\nonumber z_{t} &\leq (1-c\alpha)^t z_0 + \sum_{s=0}^{t-1} (1-c \alpha)^s C\alpha
=(1-c\alpha)^t z_0 + \frac{C}{c}\Big( 1- (1-c\alpha)^t\Big).
\end{split}
\end{equation}
Taking the limit $t \rightarrow \infty$ in the above estimate gives $\lim_{t\rightarrow \infty} z_t \leq \frac{C}{c}$, which is independent of $\alpha >0$.

\subsubsection{The idea for the $O(\sqrt{\alpha})$-convergence} To obtain an improvement beyond the $O(1)$-convergence result of \cite{LLSX}, one needs a new way to estimate the sequence of the PDG involving the projection operator not using the absolute error bound \eqref{eq-1-8} for the difference $\phi_i (t)$ related to the projection. 
 
As a key part for proving the $O(\sqrt{\alpha})$-convergence result of Theorem \ref{thm2-3}, we shall use the contraction property of the projection operator (refer to Lemma \ref{lem-1-2}) in the following way:
\begin{equation}\label{eq-3-16}
\begin{split}
&\| x_i(t+1) - x_* \|^2
\\
&= \bigg\| \mathcal{P}_\Omega\bigg[\sum^n_{j=1}w_{ij} x_j(t) -\alpha(t)\nabla f_i({x}_i(t))\bigg] - \mathcal{P}_\Omega\left[ x_* -\alpha(t)\nabla f(x_*)\right]\bigg\|^2 \\
&\leq \bigg\|\sum^n_{j=1}w_{ij} x_j(t) -x_* -\alpha(t)\left(\nabla f_i({x}_i(t))-\nabla f(x_*)\right)\bigg\|^2.
\end{split}
\end{equation}
We will then sum this inequality over $1\leq i \leq n$ and derive a gain from the right hand side, using the strong convexity and the smoothness property of the costs as well as the property of the mixing matrix $W$ and some manipulations relying on the identity
$\sum^n_{i=1}\|a + b_i\|^2 = n\|a\|^2 + \sum^n_{i=1}b_i$ which holds when $\sum^n_{i=1}b_i =0$. Then, roughly we will get the following type of inequality 
\begin{equation}\label{eq-3-17}
\| \mathbf{x} (t+1) - \mathbf{x}_* \|^2 \leq  (1-c\alpha) \| \mathbf{x}
(t) - \mathbf{x}_* \|^2 + C\alpha^2
\end{equation}
for some constants $c>0$ and $C>0$. Using this estimate recursively, one has
\begin{equation}\label{eq-3-12}
\begin{split}
\| \mathbf{x} (t) - \mathbf{x}_* \|^2 &\leq  (1-c\alpha)^t \| \mathbf{x}
(0) - \mathbf{x}_* \|^2 + C\alpha^2 \sum_{s=0}^{t-1} (1-c\alpha)^{s}
\\
&\leq (1-c\alpha)^t \| \mathbf{x}
(0) - \mathbf{x}_* \|^2 + \frac{C\alpha}{c},
\end{split}
\end{equation}
which justifies the $O(\sqrt{\alpha})$-convergence of the sequence $\mathbf{x}(t+1)$ towards the optimal point $\mathbf{x}_*$.

\subsubsection{The idea for the $O({\alpha})$-convergence} Before introducing the idea on the $O(\alpha)$-convergence result behind Theorem \ref{thm-6-10}, let us try to describe briefly a reason why the above argument for Theorem \ref{thm2-3} has a limitation in comparison with the arguments used in \cite{YLY, CK} which achieves the $O(\alpha)$-convergence result of the decentralized gradient descent for the case $\Omega = \mathbb{R}^d$. Following arguments \cite{YLY, CK}, it turns out that the DGD enjoys the following types of sequential estimates
\begin{equation}\label{eq-3-13}
 \begin{split}
 \| \bar{\mathbf{x}} (t+1) - \mathbf{x}_* \|^2& \leq (1-c\alpha)  \| \bar{\mathbf{x}} (t) - \mathbf{x}_* \|^2 + C_1\alpha^3
 \\
  \| {\mathbf{x}} (t+1) - \bar{\mathbf{x}} (t+1)  \|^2& \leq \gamma   \| {\mathbf{x}} (t+1) - \bar{\mathbf{x}} (t+1)  \|^2 + C_1\alpha^2.
  \end{split}
\end{equation}
Here $c>0$, $C_1 >0$ and $C_2 >0$ are suitable constants and $\gamma <1$ is a value related to the mixing matrix $W$. Then, similarly to \eqref{eq-3-12}, we may deduce from each of these two estimates the following estimates
\begin{equation}
\begin{split}
\nonumber\| \bar{\mathbf{x}}  (t) - \mathbf{x}_* \|^2 &\leq (1-c\alpha)^t \| \bar{\mathbf{x}}(0)- \mathbf{x}_* \|^2 + \frac{C_1\alpha^2}{c}
\\
\|  {\mathbf{x}} (t) -\bar{\mathbf{x}}  (t) \|^2 &\leq \beta^t \|  {\mathbf{x}} (0) -\bar{\mathbf{x}}  (0)  \|^2 + \frac{C_2\alpha^2}{1-\beta}.
\end{split}
\end{equation}
Combining these two estimates then gives the $O(\alpha)$-convergence result of the DGD. In contrast to this argument using the two estimates of \eqref{eq-3-13} separately, the argument of subsection 3.4.2 is based on the sequential estimate \eqref{eq-3-17} which can be interpreted as a sum of the two estimates of \eqref{eq-3-13}. This makes a limitation to obtain sharp estimates for $\| \bar{\mathbf{x}}  (t) - \mathbf{x}_* \|^2$ and $\|  {\mathbf{x}} (t) -\bar{\mathbf{x}}  (t) \|^2$. 
This limitation seems difficult to overcome if the proof begins with the estimate \eqref{eq-3-16} to deal with the projection operator. 

To get around the mentioned difficulty, we develop a new argument which analyzes the convergence of the sequence $x_k (t)$ after separating the coordinates into two parts, namely the one which is influenced by the projection operator and the other one for which the projection is ignored. We refer to Subsection \ref{sec5-2} for the details. Although we limit our case to the domain $\Omega = \mathbb{R}^{d-1} \times \mathbb{R}_{+}$ in the current work,  we believe that the main idea of the argument can be extended to more general domains such as $\mathbb{R}^{q}\times \mathbb{R}_{+}^{d-q}$ and any smooth convex domain for any $0\leq q \leq d$.

\section{Analysis for main results}\label{sec4}
In this section, we present the analysis for our main results in Section 3. As discussed in Section \ref{sec1}, the projection operator makes it difficult to average the equation \eqref{scheme} to obtain \eqref{centralized} for the convergence analysis as in the unconstrained case. To get around this difficulty, we focus on estimating the quantity $\|\mathbf{x}(t+1)-\mathbf{x}_*\|^2$ instead of $\|\bar{\mathbf{x}}(t+1)-\mathbf{x}_*\|$ to analyze the sequence generated by \eqref{scheme}. More precisely, we derive sequential estimates of $\|\mathbf{x}(t+1)-\bar{\mathbf{x}}(t+1)\|^2$ and $\|\mathbf{x}(t+1)-\mathbf{x}_*\|^2$  in terms of $\|\mathbf{x}(t)-\bar{\mathbf{x}}(t)\|^2$ and $\|\bar{\mathbf{x}}(t)-\mathbf{x}_*\|^2$ with an help of the contraction property of the projection operator in Subsection \ref{sec4-1}. These estimates correspond to the results of Proposition \ref{prop-3-1} and Proposition \ref{prop-3-1} below. These results will be the main ingredients of several main results of this paper as described in \mbox{Figure \ref{main_flow}. }
\begin{figure}[!htbp]
\begin{center}
\includegraphics[width=13cm]{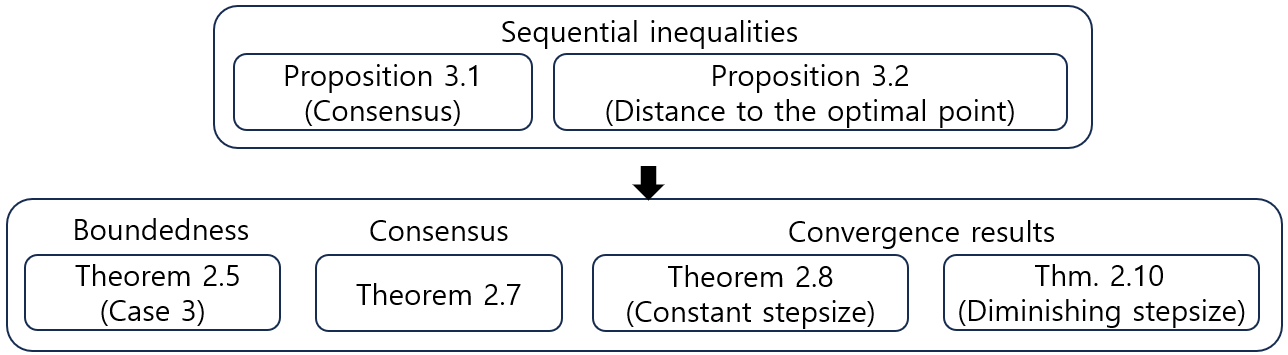}
\end{center}
\caption{The consequences of Proposition \ref{prop-3-1} and Proposition \ref{prop-3-2}.}
\label{main_flow}
\end{figure}


\subsection{Preparation: Sequential estimate}\label{sec4-1}
In this subsection, we establish sequential estimates for $\|\mathbf{x}(t+1)-\mathbf{x}_*\|^2$ and $\|\mathbf{x}(t+1)-\bar{\mathbf{x}}(t+1)\|^2$. First, we obtain a sequential estimate regarding the consensus error.
\begin{prop}\label{prop-3-1}
Suppose that Assumptions \ref{LS}-\ref{ass-1-1} hold.  If a stepsize $\{\alpha(t)\}_{t\geq0}$ satisfies $\alpha(t)\leq \frac{2}{L+\mu}$, then the sequence $\{x_i(t)\}_{t\geq0}$ generated by \eqref{scheme} satisfies the following inequality: 
\begin{equation}\label{eq-4-11}
\begin{split}
&\| \mathbf{x}(t+1) -\bar{\mathbf{x}}(t+1)\|^2\\
&\leq (c_1\alpha(t)^2 + \beta)\| \mathbf{x}(t) - \bar{\mathbf{x}}(t)\|^2 + c_1\alpha(t)^2 \|\bar{\mathbf{x}}(t)-\mathbf{x}_*\|^2 + c_2\alpha(t)^2,
\end{split}
\end{equation}
where {the constants $c_1$ and $c_2$ are defined in Theorem \ref{thm-2-11}. }
\end{prop}
\begin{proof}
See Appendix \ref{secA}
\end{proof}

The following result establishes a bound of $\|\mathbf{x}(t+1) - \mathbf{x}_*\|$ in terms of  $\|\mathbf{x}(t) - \mathbf{x}_*\|$ and $ \| \mathbf{x}(t) - \bar{\mathbf{x}}(t)\|$.  
\begin{prop}\label{prop-3-2}
Suppose that Assumptions \ref{LS}-\ref{ass-1-1} hold.  If a stepsize $\{\alpha(t)\}_{t\geq0}$  satisfies $\alpha(t)\leq \frac{2}{L+\mu}$, then the sequence $\{x_i(t)\}_{t\geq0}$ generated by \eqref{scheme}  satisfes the following inequality. 
\begin{equation*}
\begin{split}
&\|\mathbf{x}(t+1) - \mathbf{x}_*\|^2\\
&\leq \left(c_3\alpha(t)^2 + c_4\alpha(t) + \beta\right) \| \mathbf{x}(t) - \bar{\mathbf{x}}(t)\|^2 \\&\qquad\qquad\qquad\qquad\qquad\qquad+\bigg(1-\frac{\mu}{2}\alpha(t) + c_1\alpha(t)^2\bigg) \|\bar{\mathbf{x}}(t)-\mathbf{x}_*\|^2 + c_2\alpha(t)^2,
\end{split}
\end{equation*}
where {the constants $c_1$, $c_2$, $c_3$ and $c_4$ are defined in Theorem \ref{thm-2-11}. }
\end{prop}
\begin{proof}
See Appendix \ref{secA}
\end{proof}
\subsection{Consensus analysis (Proof of Theorem \ref{thm2-5})}\label{sec4-3}
In this section, we establish the consensus result of Theorem \ref{thm2-5} by using the sequential estimates of  Propositions \ref{prop-3-1} and \ref{prop-3-2}. 
\begin{proof}[Proof of Theorem \ref{thm2-5}]
We write the estimate  \eqref{eq-4-11}  in Proposition \ref{prop-3-1} using the formula \eqref{eq-3-3} as
\begin{equation*}
\begin{split}
&\| \mathbf{x}(t+1) -\bar{\mathbf{x}}(t+1)\|^2 \\
&\leq \beta\| \mathbf{x}(t) - \bar{\mathbf{x}}(t)\|^2 + \alpha(t)^2(c_1(\| \mathbf{x}(t) - \bar{\mathbf{x}}(t)\|^2+ \|\bar{\mathbf{x}}(t)-\mathbf{x}_*\|^2) + c_2) \\
&\leq \beta \| \x(t)-\bx\|^2 +\alpha(t)^2\left(c_1\|\x(t)-\x_*\|^2 + c_2\right). 
\end{split}
\end{equation*}
 Recalling that $c_1 = 3L^2/(1-\beta)$ and $c_2 = 3nR_{d,*}/(1-\beta)$, and using the bound $\| \mathbf{x}(t) -\bar{\mathbf{x}}(t)\|^2 \leq R$ from Assumption \ref{ass-5}, the above inequality can be written as follows:
\begin{equation} 
\nonumber\| \mathbf{x}(t+1) -\bar{\mathbf{x}}(t+1)\|^2 \leq \beta\| \mathbf{x}(t) - \bar{\mathbf{x}}(t)\|^2 + \frac{3\alpha(t)^2}{1-\beta}(L^2 R_s +nR_{d}^{*})\\
\end{equation}
Using this recursively, we deduce
\begin{equation}\label{eq6-33}
\| \mathbf{x}(t+1) -\bar{\mathbf{x}}(t+1)\|^2 \leq \beta^{t+1}\|\mathbf{x}(0)-\bar{\mathbf{x}}(0)\|^2 + \frac{3}{1-\beta}(L^2 R_s +nR_{d}^*)\sum^t_{s=0}\alpha(s)^2\beta^{t-s}.
\end{equation}
We will now estimate \eqref{eq6-33} for the cases where $\alpha(t)$ is non-increasing and where $\alpha(t)$ is given by a constant $\alpha>0$, respectively. If $\alpha (t)$ is non-increasing, we have
\begin{equation}\label{eq6-4}
\begin{split}
\nonumber\sum^{t}_{s=0}\alpha(s)^2\beta^{t-s} &= \sum^{[t/2]-1}_{s=0}\alpha(s)^2\beta^{t-s} + \sum^{t}_{s=[t/2]}\alpha(s)^2\beta^{t-s}  \\
&\leq \alpha(0)^2\frac{\beta^{t/2}}{1-\beta} + \alpha([t/2])^2\frac{1}{1-\beta}.
\end{split}
\end{equation}
Inserting this inequality to \eqref{eq6-33}, we have
\begin{equation*}
\|\mathbf{x}(t+1)-\bar{\mathbf{x}}(t+1)\|^2 \leq \beta^{t+1}\|\mathbf{x}(0)-\bar{\mathbf{x}}(0)\|^2  + \frac{R_c \alpha(t)^2}{(1-\beta)^2},
\end{equation*}
{where $R_c$ is defined in \eqref{rc}.}

For a constant stepsize $\alpha(t)\equiv\alpha$, we can estimate \eqref{eq6-33} as
\begin{equation*}\label{eq6-3}
\begin{split}
\| \mathbf{x}(t+1) -\bar{\mathbf{x}}(t+1)\|^2
&\leq \beta^{t+1}\|\mathbf{x}(0)-\bar{\mathbf{x}}(0)\|^2 + \frac{3\alpha^2}{1-\beta}(L^2R_s +nR_D^*)\sum^t_{s=0}\beta^{t-s}\\
&\leq \beta^{t+1}\|\mathbf{x}(0)-\bar{\mathbf{x}}(0)\|^2 + \frac{3\alpha^2}{(1-\beta)^2}(L^2R_s +nR_D^*),
\end{split}
\end{equation*}
which completes the proof.
\end{proof}

\subsection{Convergence analysis (Proof of Theorem \ref{thm2-3} and Theorem \ref{thm2-6}}\label{sec4-4}
In this section, we prove the convergence results of Theorem \ref{thm2-3} and Theorem \ref{thm2-6}. For simplicity of the exposition, we set
\begin{equation}\label{eq5-4}
A(t) = \|\mathbf{x}(t) -\bar{\mathbf{x}}(t)\|^2,\quad  B(t) =\|\bar{\mathbf{x}}(t) - \mathbf{x}_*\|^2.   
\end{equation}
Note that using \eqref{eq-3-3} it follows that
$$
 A(t) + B(t)= \|\mathbf{x}(t) -\bar{\mathbf{x}}(t)\|^2  + \|\bar{\mathbf{x}}(t) - \mathbf{x}_*\|^2=\|\mathbf{x}(t) - \mathbf{x}_*\|^2.
$$ 
In addition, we can rewrite Theorem \ref{thm2-5} and Proposition \ref{prop-3-2} as
\begin{equation}\label{eq7-2}
A(t)\leq \beta^t A(0) + \frac{C\alpha (t)^2}{(1-\beta)^2},
\end{equation}
where $C=3(L^2 R_s  + nR_D^*)$ if $\alpha(t)\equiv \alpha$ or $C=R_D$ otherwise, and
\begin{equation}\label{eq7-3}
\begin{split}
&A(t+1)+B(t+1)\\
&\leq (c_3\alpha(t)^2 + c_4\alpha(t) + \beta) A(t) +\bigg(1-\frac{\mu}{2}\alpha(t) + c_1\alpha(t)^2\bigg) B(t) + c_2\alpha(t)^2.
\end{split}
\end{equation}
Then we further write \eqref{eq7-3} as follows:
\begin{equation*}
\begin{split}
B(t+1)&\leq A(t+1) + B(t+1) \\
&\leq \left(1-\frac{\mu}{2}\alpha(t)\right)B(t) +  \left(c_3\alpha(t)^2 + c_4\alpha(t) + {\beta}\right)A(t) + \left(c_1 B(t) + c_2\right)\alpha(t)^2 \\
&\leq \left(1-\frac{\mu}{2}\alpha(t)\right)B(t) +  (c_3\alpha(t)^2 + c_4\alpha(t) + {\beta})A(0)\beta^t \\
&\quad + \left((c_3\alpha(t)^2 + c_4\alpha(t) + \beta)\frac{C}{(1-\beta)^2} + c_1B(t) + c_2\right)\alpha(t)^2,
\end{split}
\end{equation*}
where we used  \eqref{eq7-2} in the last inequality.
Since $A(t),B(t)\leq R_s$ by Assumption \ref{ass-5}, it follows that
\begin{equation}\label{eq-7-20}
\begin{split}
B(t+1) &\leq \left(1-\frac{\mu}{2}\alpha(t)\right) B(t) + (c_3\alpha(t)^2 + c_4\alpha(t) + \beta)R_s \beta^t \\
&\quad + \left((c_3\alpha(t)^2 + c_4\alpha(t) + \beta)\frac{C}{(1-\beta)^2} + c_1 R_s + c_2\right)\alpha(t)^2.
\end{split}
\end{equation}
This inequality serves as a main tool for deriving our main results, namely Theorem \ref{thm2-3} and Theorem \ref{thm2-6}. First, we handle the case where the stepsize is constant.
\begin{proof}[Proof of Theorem \ref{thm2-3}]
Since $\alpha(t)\equiv\alpha$ and $C=3(L^2R_s +nR_D^*)$ in this case, we can write \eqref{eq-7-20} as
\begin{equation*}
\begin{split}
B(t+1) &\leq \left(1-\frac{\mu}{2}\alpha\right) B(t) + \left(c_3\alpha^2 + c_4\alpha + \beta\right)R_s \beta^t \\
&\quad + \left((c_3\alpha^2 + c_4\alpha + \beta)\frac{3(L^2R_s + nR_D^*)}{(1-\beta)^2} + c_1 R_s + c_2\right)\alpha^2.
\end{split}
\end{equation*}
To estimate the sequence $B(t)$  we consider the following two sequences $\{q_1 (t)\}_{t\geq0}$ and $\{q_2(t)\}_{t\geq0}$ satisfying $q_1(0)=0$, $q_2(0)=B(0)$ and
\begin{equation*}
\begin{split}
q_1 (t+1)& = \Big( 1- \frac{\mu}{2}\alpha \Big) q_1 (t) + \left(c_3\alpha^2 + c_4\alpha + \beta\right)R_s \beta^{t},
\\
q_2(t+1)& = \Big( 1- \frac{\mu}{2}\alpha \Big) q_2(t) +  \left(\left(c_3\alpha^2 + c_4\alpha + \beta\right)\frac{3(L^2 R_s + nR_D^*)}{\left(1-\beta\right)^2} + c_1R_s + c_2\right)\alpha^2.
\end{split}
\end{equation*}
It then easily follows that $B(t) \leq q_1 (t) + q_2 (t)$ for all $t \geq 0$. 
We first estimate the sequence $q_1(t)$. We solve the recursive formula of  $q_1(t)$ to find
\begin{equation*}
\begin{split}
q_1(t) &= \left(1-\frac{\mu}{2}\alpha\right)^{t}q_1(0) +\left(c_3\alpha^2 + c_4\alpha + \beta\right)R_s \sum_{s=0}^{t-1}   \beta^{s} \Big( 1- \frac{\mu \alpha}{2}\Big)^{t-1-s}\\
&\leq t{\left(c_3\alpha^2 + c_4\alpha + \beta\right)R_s} \max\Big\{\left(1-\frac{\mu\alpha}{2}\right)^{t-1}, ~ \beta^{{t-1}}\Big\},
\end{split}
\end{equation*}
where we used $q_1(0)=0$ and the following estimates for the last inequality:
\begin{equation*}
\begin{split}
\sum_{s=0}^{t-1}   \beta^{s} \Big( 1- \frac{\mu \alpha}{2}\Big)^{t-1-s} & \leq t\max\Big\{\left(1-\frac{\mu\alpha}{2}\right)^{t-1}, ~ \beta^{{t-1}}\Big\}.
\end{split}
\end{equation*}
Next, we estimate the term $q_2(t)$ as follows:
\begin{equation*}
\begin{split}
q_2(t)  &= \Big(1 - \frac{\mu \alpha}{2}\Big)^{t} q_2(0)  \\
&\qquad+ \left(\left(c_3\alpha^2 + c_4\alpha + \beta\right)\frac{3(L^2 R_s + nR_D^*)}{\left(1-\beta\right)^2} + c_1 R_s + c_2\right)\alpha^2\sum^{t-1}_{s=0} \left(1-\frac{\mu\alpha}{2}\right)^{t-1-s}\\
&\leq \Big(1 - \frac{\mu \alpha}{2}\Big)^{t} B(0) + \frac{2\alpha}{\mu} \left(\left(c_3\alpha^2 + c_4\alpha + \beta\right)\frac{3(L^2R_s + nR_D^*)}{\left(1-\beta\right)^2} + c_1 R_s + c_2\right)
\end{split}
\end{equation*}
Here used $q_2(0)=B(0)$ and $\sum^{t-1}_{s=0} \left(1-\frac{\mu\alpha}{2}\right)^{t-1-s} \leq \frac{2}{\mu\alpha}$ for the last inequlaity.
Combining the above two estimates, we obtain
\begin{equation*}
B(t) \leq q_1(t)+q_2(t)\leq \Big(1 - \frac{\mu \alpha}{2}\Big)^{t} B(0) + t G_1 \max \Big\{ \left(1-\frac{\mu\alpha}{2}\right)^{t-1} + \beta^{{t-1}}\Big\} + G_2\alpha,
\end{equation*}
where {$G_1$ and $G_2$ are defined in Theorem \ref{thm2-3}.} This inequality is the desired estimate. 
\end{proof}
Next, we prove Theorems \ref{thm2-6}, where the stepsize is defined as $\alpha (t) = \frac{v}{(t+w)^p}$ for $0<p\leq1$. Since $\alpha(t) \leq \alpha(0)$, we can write \eqref{eq-7-20} as
\begin{equation}\label{eq-5-9}
\begin{split}
B(t+1) &\leq \left(1-\frac{\mu}{2}\alpha(t)\right) B(t) + (c_3\alpha(0)^2 + c_4\alpha(0) + \beta)R_s \beta^t \\
&\quad + \left((c_3\alpha(0)^2 + c_4\alpha(0) + \beta)\frac{R_c}{(1-\beta)^2} + c_1 R_s + c_2\right)\alpha(t)^2.
\end{split}
\end{equation}
Then, it follows from \eqref{eq-5-9} that
\begin{equation*}
B(t+1) \leq \Big( 1- \frac{\mu}{2}\alpha (t)\Big) B(t) + \Big( \rho_2 G_3+ G_4\Big) \alpha (t)^2,
\end{equation*}
where {$G_3$ and $G_4$ are defined in \eqref{G_const}, and $\rho_2$ is defined in \eqref{rho1rho2}.} To obtain a convergence rate from the above inequality, we recall the following lemma.
\begin{lem}[Proposition 5.1 in \cite{CK}]\label{lem-4-3} 
 Let $ p \in (0,1]$ and $q>0$. Take $C_1 >0$ and $w \geq 1$ such that $C_1/w^p <1.$ 
Suppose that the sequence $\{H(t)\}_{t\geq0}$ satisfies
\begin{equation*}\label{4-4}
H(t) \leq \bigg(1-\frac{C_1}{(t+w-1)^p}\bigg) H(t-1) + \frac{C_2}{(t+w-1)^{p+q}} \quad \text{for all $t\geq1$}.
\end{equation*}
Set $Q = \Big( \frac{w+1}{w}\Big)^{p+q}$. Then $H(t)$ satisfies the following bound.

\medskip 

\noindent \textbf{Case $p\in(0,1)$.} If $p \in (0,1)$, we have
\begin{equation*}
 H(t) \leq   \delta \cdot([t/2]+w-1)^{-q} + \mathcal{M}(t),
\end{equation*}
where  $\delta =  \frac{QC_2}{C_1} e^{\frac{C_1}{w^p}}$ and
\begin{equation*}
\mathcal{M}(t) =e^{-\sum^{t-1}_{s=0}\frac{C_1}{(s+w)^p}}H(0)  + QC_2e^{- \frac{C_1t}{2(t+w)^p}}  \sum_{s=1}^{[t/2]-1} \frac{1}{(s+w)^{p+q}}.
\end{equation*}
Here the second term on the right hand side is assumed to be zero for $1\leq t \leq 3$.
\medskip

\noindent \textbf{Case $p=1$.} If $p=1$, then we have
\begin{equation*}
 H(t) \leq \Big( \frac{w}{t+w}\Big)^{C_1} H(0) + \tilde{\mathcal{M}} (t),
\end{equation*}
where
\begin{equation*}
  \tilde{\mathcal{M}}    = \left\{\begin{array}{ll}  \frac{w^{C_1 -q}}{q-C_1}\cdot\frac{QC_2}{(t+w)^{C_1}}& \textrm{if}~q>C_1
\\
\log \left(\frac{t+w}{w}\right)\cdot\frac{QC_2}{(t+w)^{C_1}} &\textrm{if}~ q=C_1
 \\
\frac{1}{C_1-q}\cdot\left( \frac{w+1}{w}\right)^{C_1}\cdot \frac{QC_2}{(t+w+1)^q}& \textrm{if}~ q<C_1.
\end{array}\right.
\end{equation*} 
\end{lem}
We are ready to prove Theorem \ref{thm2-6}.
\begin{proof}[Proof of Theorem  \ref{thm2-6}]  For the stepsize $\alpha (t) = v/(t+w)^p$, the estimate \eqref{eq-7-20} reads as
\begin{equation*}
B(t+1) \leq \Big( 1- \frac{\mu v}{2(t+w)^p}\Big) B(t) + \Big( \rho_2 G_3 + G_4\Big)\frac{  v^2}{(t+w)^{2p}}.
\end{equation*}
By applying Lemma \ref{lem-4-3} for the case $p\in(0,1)$, we have
\begin{equation*}
B(t) \leq \frac{4\rho_1 \Big( \rho_2 G_3 + G_4\Big)  v}{\mu} ([t/2]+w-1)^{-p} + \mathcal{R}_1(t) + \mathcal{R}_2(t),
\end{equation*}
where 
\begin{equation*}
\begin{split}
\mathcal{R}_1(t) &= e^{-\sum^{t-1}_{s=0}\frac{\mu v}{2(s+w)^p}}B(0)\\
\mathcal{R}_2(t) &= \rho_1 \Big( \rho_2 G_3 + G_4\Big) v^2 e^{-\frac{\mu vt}{4(t+w)^p}}\sum^{[t/2]-1}_{s=1}\frac{1}{(s+w)^{2p}}
\end{split}
\end{equation*}
with constant $\rho_1 = \Big( \frac{w+1}{w}\Big)^{2p}$. It gives the desired estimate for $p \in (0,1)$.

For the stepsize $\alpha (t) = v/(t+w)$, 
the estimate \eqref{eq-7-20} is written as
\begin{equation*}
B(t+1) \leq \Big( 1-\frac{\mu v}{2 (t+w)}\Big)B(t) +\Big( \rho_2 G_3 + G_4\Big) \frac{v^2}{(t+w)^2}.
\end{equation*}
Choose $v>0$ so that $C_1 = \mu v/2 >1$. Then we use Lemma \ref{lem-4-3} for the case $p=1$ to derive the following estimate 
\begin{equation*}
B(t) \leq \Big( \frac{w}{t+w}\Big)^{C_1} B(0) + \frac{1}{C_1 -1} \Big( \frac{w+1}{w}\Big)^{C_1} \frac{\rho_1 \Big( \rho_2 G_3 + G_4\Big)  v^2}{(t+w-1)},
\end{equation*}
where {$\rho_1$ is defined in \eqref{rho1rho2}}. This is the desired estimate.
\end{proof}

\subsection{Uniform boundedness of the sequence}\label{sec4-2}
In subsections \ref{sec4-1}, \ref{sec4-3}, and \ref{sec4-4}, we derived our main results under the uniform boundedness assumption $\|\x(t)-\x_*\|^2\leq R_s$ (refer to Assumption \ref{ass-5}). This uniform boundedness assumption is satisfied if at least one of these conditions which are presented in Theorem \ref{thm-2-11} is satisfied. In this subsection, we prove the uniform boundedness of the sequence $\{x_i(t)\}_{t\geq0}$ as stated in Theorem \ref{thm-2-11}. Notably, uniform boundedness is trivial for the first case where $\Omega$ is assumed to be bounded. Next, we prove the theorem for the second case where each local cost function $f_i$ is convex and satisfies Assumption \ref{LS}. In addition, the stepsize is constant, i.e., $\alpha (t) \equiv \alpha$, satisfying $\alpha \leq \frac{1 + \lambda_n (W)}{L}$
\begin{proof}[Proof of Theorem \ref{thm-2-11} for case 2]
We first recall Consider the following functional $E_{\alpha}:(\mathbb{R}^{d})^n \rightarrow \mathbb{R}$ defined as
\begin{equation*}
E_{\alpha} (x) = \frac{1}{2} \Big( \sum_{i=1}^n \|x_i\|^2 - \sum_{i=1}^n \sum_{j=1}^n w_{ij} \langle x_i, x_j \rangle \Big) + \alpha \sum_{i=1}^n f_i (x_i).
\end{equation*}
Then 
\begin{equation*}
x(t+1) = P_{\Omega^n} \Big(x(t) -\nabla E_{\alpha}(x(t))\Big),
\end{equation*}
where $\Omega^n = \{(x_1, \cdots, x_n) \in (\mathbb{R}^d)^n~:~x_j \in \Omega ~\textrm{for}~ 1 \leq j \leq n\}$.
The function $E_{\alpha}$ is convex and smooth with constant $1-\lambda_n (W) + \alpha L$, where $\lambda_n (W)$ is the smallest eigenvalue of $W$ (refer to \cite{YLY}). Then, we may use the general result for the projected gradient descent (see e.g., \cite{B}) to conclude that
 the sequence $\{x(t)\}_{t \geq 0}$ is uniformly bounded if
\begin{equation*}
1 \leq \frac{2}{1- \lambda_n (W) +\alpha L},
\end{equation*}
which is equivalent to $\alpha \leq \frac{1+\lambda_n (W)}{L}$.
\end{proof}
Next, we consider Case 3 in Theorem \ref{thm-2-11}. In the following lemma, we find a sequential inequality for the sequence $\{\|\x(t) - \x_*\|^2\}_{t\geq0}$ and find its uniform boundedness, which also implies the uniform boundedness of $\{\|\x(t) - \bx(t)\|^2\}_{t\geq0}$ and $\{\|\bx(t)-\x_*\|^2\}_{t\geq0}$. It contains the proof of Theorem \ref{thm-2-11} for case 3.
\begin{lem}\label{lem-3-3}
Suppose that Assumptions \ref{LS}-\ref{ass-1-1} hold and the stepsize $\{\alpha(t)\}_{t\geq0}$ is non-increasing and satisfies
\begin{equation}\label{eq5-7}
\nonumber\alpha(t) \leq \min\bigg\{Z, \frac{\mu}{4c_1}, \frac{2}{L+\mu}\bigg\}
\end{equation}
where $Z$ is defined in Theorem \ref{thm-2-11}.
Then the sequence $\{x_i(t)\}_{t\in\mathbb{N}_0}$ generated by \eqref{scheme} satisfies the following statements. 
\begin{enumerate}
\item We have
\begin{equation}\label{4-11}
\|\x(t+1) - \x_*\|^2 \leq \Big(1-\frac{\mu}{4}\alpha(t)\Big)\|\x(t) - \x_*\|^2 + c_2\alpha(t)^2.
\end{equation}
\item There exists $R >0$ such that
\begin{equation*}
\|\x(t) - \x_*\|^2 \leq R, \ \text{for all $t\in\mathbb{N}$.} 
\end{equation*}
In fact, we may set $R   = \max\left\{\frac{4c_2  }{\mu}\sup_{t \geq 0} \alpha (t),~ \|\x(0) - \x_*\|^2\right\}$
\end{enumerate} 

\end{lem}
\begin{proof}
We first prove \eqref{4-11}. By Proposition \ref{prop-3-2} and using the notation \eqref{eq5-4} again, we have the following estimate
\begin{equation}\label{eq-4-10}
\begin{split}
&\|\x(t+1) - \x_*\|^2 \\
&\leq (c_3\alpha(t)^2 + c_4\alpha(t) + \beta)A(t)+\bigg(1-\frac{\mu}{2}\alpha(t) + c_1\alpha(t)^2\bigg) B(t) + c_2\alpha(t)^2.
\end{split}
\end{equation}
In view of the relation $\|\x(t)-\x_*\|^2 = A(t) + B(t)$, our objective is to bound the coefficients of $A(t)$ and $B(t)$ in the aobve inequality by $(1 - c\alpha(t))$ for a fixed value $c>0$. Choosing the value $c$ as $\mu/4$, we deduce the desired esitmate \eqref{4-11} as follows:
\begin{equation*}
\begin{split}
\|\x(t+1) - \x_*\|^2&\leq \left(1-\frac{\mu}{4}\alpha(t))\right)(A(t)+B(t)) + c_2\alpha(t)^2\\
&= \left(1-\frac{\mu}{4}\alpha(t))\right)\|\x(t+1) - \x_*\|^2 + c_2\alpha(t)^2,
\end{split}
\end{equation*}
provided that we have the following two inequalities for all $t \geq 0$:
\begin{equation}\label{eq-4-1}
1-\frac{\mu}{2}\alpha(t) + c_1\alpha(t)^2 < 1-\frac{\mu}{4}\alpha(t),
\end{equation}
\begin{equation}\label{eq-4-2}
c_3\alpha(t)^2 + c_4\alpha(t) +\beta < 1- \frac{\mu}{4}\alpha(t).
\end{equation} 
Since $\alpha(t)\leq \frac{\mu}{4c_1}$, we obtain \eqref{eq-4-1} as follows:
\begin{equation*}
1-\frac{\mu}{2}\alpha(t) + c_1\alpha(t)^2 = 1 - \left(\frac{\mu}{2}-c_1\alpha(t)\right)\alpha(t)\leq 1 - \frac{\mu}{4}\alpha(t).
\end{equation*}
To obtain \eqref{eq-4-2}, we note that \eqref{eq-4-2} is equivalent to 
\begin{equation}\label{eq5-14}
c_3 \alpha(t)^2 + \Big(c_4 + \frac{\mu}{4}\Big)\alpha(t) + \beta -1 \leq 0.
\end{equation}
Therefore, the following inequality is a sufficient condition for \eqref{eq5-14}:
\begin{equation*}
\alpha(t)  \leq Z:= \frac{1}{2c_3} \left[ - \Big(c_4 +\frac{\mu}{4}\Big) + \sqrt{\Big(c_4 +\frac{\mu}{4}\Big)^2 + 4c_3 (1-\beta)}\right].
\end{equation*}
Since $\beta<1$, we have $Z>0$.
This proves the first estimate of the lemma. 

In order to show the second estimate, we argue by induction. Fix a value $R>0$ and assume that $\|\x(t) - \x_*\|^2 \leq R$ for some $t \in \mathbb{N}_0$. Then, it follows from \eqref{4-11} that
\begin{equation}\label{eq-4-15}
\begin{split}
\|\x(t+1) - \x_*\|^2&\leq \left(1-\frac{\mu}{4}\alpha(t)\right) R +c_2\alpha(t)^2= R - \left(\frac{\mu}{4}R -c_2\alpha(t)\right)\alpha(t).
\end{split}
\end{equation}
If we set $R = \frac{4c_2 }{ \mu }\max_{t \geq 0} \alpha (t)$, then we have
$$
\frac{\mu}{4}R -c_2\alpha(t)\geq  0.
$$ 
Inserting this into \eqref{eq-4-15} proves that $\|\x(t+1) - \x_*\|^2\leq R$. Therefore we have $\|\x(t) - \x_*\|^2 \leq R$ for any $t \geq 0$.
\end{proof}
 
 \begin{remark}
 We remark from the above proof that for the uniform boundedness result of Lemma \ref{lem-3-3}, we may have a weakened version of \eqref{4-11} of the form
 \begin{equation}
\nonumber\|\x(t+1) - \x_*\|^2 \leq \Big(1-\zeta\alpha(t)\Big)\|\x(t) - \x_*\|^2 + c_2 \alpha(t)^2,\end{equation}
 where $\zeta\in \mathbb{R}$ is chosen as $\zeta  \in (0, \mu/2)$. Then the associated inequalities \eqref{eq-4-1} and \eqref{eq-4-2} are relaxed, and so the range of $\alpha (t)$ can be enlarged. 
 \end{remark}

\section{Improved convergence estimate}\label{sec5}
In this section, we provide the proof of Theorem \ref{thm-6-10} which establishes the $O(\alpha)$-convergence result for the half-space. Before this, we exhibit a simple example in dimension one for which we show the $O(\alpha)$-convergence result based on a direct computation of the algorithm. This example suggests that obtaining the $O(\alpha)$-convergence result for the DPG becomes more intricate than that of the DGD due to the projection operator.  
 
\subsection{One-dimensional example}\label{sec5-1}
We consider the functions \mbox{$g_1, g_2 : [0,\infty)\rightarrow \mathbb{R}$} defined by
\begin{equation}\label{eq8-2}
g_1 (x) = 5x^2 \quad \textrm{and}\quad g_2 (x) = -3x^2, \quad x\in [1,\infty]
\end{equation}
and the mixing matrix $\tilde{W}$ defined by
\begin{equation}\label{eq8-3}
\tilde{W} = \begin{pmatrix} 2/3 & 1/3 \\ 1/3 & 2/3 \end{pmatrix}
\end{equation}
satisfying Assumption \ref{ass-1-1}. We note that the total cost function $g = (g_1+g_2)/2$ has the optimal point at $x_*=1$. Then we can represent the projected decentralized gradient descent algorithm with a constant stepsize $\alpha$ explicitly as follows:
\begin{equation}\label{eq8-4}
\begin{split}
x_1(t+1) & =  \max \Big[\frac{2}{3}x_1(t) + \frac{1}{3}x_2(t) - 10\alpha x_1(t),~ 1\Big],
\\
x_2(t+1) & = \max \Big[ \frac{1}{3}x_1(t) + \frac{2}{3}x_2(t) +6\alpha x_2(t),~1\Big].
\end{split}
\end{equation}
We first establish that the state $(x_1(t),x_2(t))$ generated by the algorithm \eqref{eq8-4} will be confined to a certain region after a finite number of iterations. 
\begin{lem}\label{lem8-1}
Let $g_1(x)$, $g_2(x)$ and the mixing matrix $\tilde{W}$ be defined by \eqref{eq8-2} and \eqref{eq8-3} and let $\mathbf{x}(t) = (x_1(t), x_2(t))$ be the state at $t\geq0$ generated by \eqref{eq8-4}. Then for any initial state $\mathbf{x}(0) = (x_1(0), x_2(0)) \in [0,1\infty)^2$ and $\alpha \in (0,1/45)$, there exists $t_0 \leq \log_{\lambda+} (1/\|\mathbf{x}(0)\|_2)$ such that $x_1 (t_0+1) =1$ and $x_2 (t_0 +1) \leq 1 + 30\alpha$, where $\lambda_{+}\in (0,1)$ is defined as
\begin{equation*}
\lambda_{+} = \frac{2}{3} - 2\alpha + \sqrt{\frac{1}{9} +64\alpha^2},
\end{equation*}
which is  less than $1$ for $\alpha <1/45$. 
\end{lem}
\begin{proof}[Proof]
We divide the proof into three steps.

\noindent \textbf{Step 1}: There is an integer $t_0 \leq \log \|\mathbf{x}(0)\|_2/ \log(1/\lambda_{+})$ such that \mbox{$x_1 (t_0+1) =1$.}

To show this, we  assume that 
\begin{equation}\label{eq-5-4}
x_1 (s) >1 \quad \forall~0 \leq s \leq \|\mathbf{x}(0)\|_2/ \log(1/\lambda_{+})+1.
\end{equation} Then it follows from \eqref{eq8-4} that
\begin{equation}\label{eq8-9}
\begin{pmatrix} x_1 (s+1) \\ x_2 (s+1) \end{pmatrix} = \begin{pmatrix} \frac{2}{3}-10\alpha & \frac{1}{3} \\ \frac{1}{3} & \frac{2}{3} + 6\alpha \end{pmatrix} \begin{pmatrix} x_1 (s) \\ x_2 (s) \end{pmatrix},
\end{equation}
where we also used that $x_2 (s+1)>1$ since $x_1 (s) \geq 1$ and $x_2 (s) \geq 1$ for all $s \geq 0$.
The eigenvalues of the matrix on the right hand side of \eqref{eq8-9} are
\begin{equation*}
\lambda_{\pm} = \frac{2}{3} - 2\alpha \pm \sqrt{\frac{1}{9} +64\alpha^2} 
\end{equation*}
which are positive and less than $1$ for $\alpha \in (0, 1/45)$. Therefore
\begin{equation*}
\|(x_1 (s+1), x_2 (s+1))\|_2 \leq \lambda_{+} \|(x_1 (s), x_2 (s))\|_2,
\end{equation*}
and so 
\begin{equation*}
\|(x_1 (s), x_2 (s))\|_2 \leq \lambda_{+}^{s} \|(x_1 (0), x_2 (0))\|_2\quad \forall~0 \leq s \leq \|\mathbf{x}(0)\|_2/ \log(1/\lambda_{+})+1.
\end{equation*}
Let $s_0$ be the maximal integer $\leq \|\mathbf{x}(0)\|_2/ \log(1/\lambda_{+})+1$. Then it follows from the above inequality that
\begin{equation}
\nonumber\|(x_1 (s_0), x_2 (s_0))\|_2 \leq \lambda_{+}^{s_0} \|(x_1 (0), x_2 (0))\|_2 \leq 1.
\end{equation}
This yields that $x_1 (s_0) \leq 1$, which contradicts to the fact that $x_1 (s_0 ) >1$ from the assumption \eqref{eq-5-4}. Therefore \eqref{eq-5-4} is not true, and so there exists an integer $t_0 \leq \log_{\lambda+} (1/\|\mathbf{x}(0)\|_2)$ such that $x_1 (t_0 +1) \leq 1$.

\medskip 

\noindent \textbf{Step 2}: It holds that 
\begin{equation}\label{eq-5-7}
x_1 (t_0) \leq (3-x_2 (t_0))/(2-30\alpha)\quad \textrm{and}\quad  x_2 (t_0) \leq 1+30\alpha.
\end{equation}
In order to find these estimates, we combine $x_1 (t_0+1) =1$ and \eqref{eq8-4} to see that
\begin{equation}\label{eq-9-1}
\nonumber\frac{2}{3} x_1 (t_0) + \frac{1}{3} x_2 (t_0) - 10 \alpha x_1 (t_0) \leq 1.
\end{equation}
This leads to $x_1 (t_0) \leq (3-x_2 (t_0))/(2-30\alpha)$ and
\begin{equation*}
\begin{split}
\frac{1}{3} x_2 (t_0)& \leq 1 - \Big( \frac{2}{3} - 10\alpha \Big) x_1 (t_0) 
\\
&\leq 1- \Big( \frac{2}{3} -10\alpha\Big) = \frac{1}{3} + 10\alpha,
\end{split}
\end{equation*}
which implies that $x_2 (t_0) \leq 1 + 30 \alpha$.
 
\medskip

\noindent \textbf{Step 3}: We have $x_2 (t_0+1) \leq 1+30 \alpha$.

Using \eqref{eq-5-7} we deduce
\begin{equation*}
\begin{split}
\frac{1}{3} x_1 (t_0) + \Big( \frac{2}{3} + 6\alpha\Big) x_2 (t_0) 
&\leq \frac{1}{3} \Big( \frac{3-x_2 (t_0)}{2-30\alpha}\Big) + \Big(\frac{2}{3} + 6\alpha\Big) x_2 (t_0)
\\
& = \frac{1}{2-30\alpha} +\Big( \frac{2}{3} + 6\alpha - \frac{1}{6-90\alpha}\Big) x_2 (t_0). 
\end{split}
\end{equation*} 
Inserting this into \eqref{eq8-4} we find 
\begin{equation*}
\begin{split}
x_2 (t_0+1)& = \max\left[\frac{1}{3} x_1 (t_0) + \Big( \frac{2}{3} + 6\alpha\Big) x_2 (t_0), 1\right]
\\
& = \max\left[\frac{1}{2-30\alpha} +\Big( \frac{2}{3} + 6\alpha - \frac{1}{6-90\alpha}\Big) x_2 (t_0), 1\right]. 
\end{split}
\end{equation*}
Combining this with $x_2 (t_0) \leq 1+30\alpha$, we get
\begin{equation*}
\begin{split}
x_2 (t_0+1) & \leq \max\left[\frac{1}{2-30\alpha} + \Big( \frac{2}{3} + 6\alpha - \frac{1}{6-90\alpha}\Big) (1+30\alpha),1\right]
\\
& \leq 1+30\alpha.
\end{split}
\end{equation*}
Here the second inequality follows by observing the first component in the max operator equivalent form
\begin{equation*}
1 \leq (1-22\alpha +180\alpha^2) (1+30\alpha).
\end{equation*}
This can be rewritten as 
\begin{equation*}
0 \leq \alpha (1-45\alpha)(1-15\alpha),
\end{equation*}
which holds true for $ \alpha \in (0, 1/45)$.
\end{proof}

Now, we verify that the state $(x_1(t),x_2(t))$ generated by the algorithm \eqref{eq8-4} converges to an $O(\alpha)$ neighborhood of the optimal point $(1,1)$.   
\begin{thm}\label{thm8-1}
Let $g_1(x)$, $g_2(x)$ and the mixing matrix $\tilde{W}$ be defined by \eqref{eq8-2} and \eqref{eq8-3} and let $\mathbf{x}(t) = (x_1(t), x_2(t))$ be the state at $t\geq0$ generated by \eqref{eq8-4}. Then for any initial state $\mathbf{x}(0) = (x_1(0), x_2(0))$ and $\alpha \in (0,1/45)$, the state $\mathbf{x}(t)$ converges exponentially fast to the point $(1, 1/(1-18\alpha))$.
\end{thm}
\begin{proof}
By Lemma \ref{lem8-1}, we can choose $t_0$ satisfying $x_1 (t_0+1) =1$ and $x_2 (t_0 +1) \leq 1 + 30\alpha$. Note that if $\alpha<1/45$ then $1/(1-18\alpha)<1+30\alpha$. First it follows from \eqref{eq8-4} that
\begin{equation}
x_1(t+1)  =  \max \Big[\frac{2}{3}x_1(t) + \frac{1}{3}x_2(t) - 10\alpha x_1(t),~ 1\Big] =1
\end{equation}
using the following inequality 
\begin{equation*}
\frac{2}{3} x_1 (t_0 +1)+ \frac{1}{3}x_2(t_0+1)-10\alpha<1,
\end{equation*}
owing to the fact that $x_1(t_0+1) = 1$ and $x_2(t_0+1)<1+30\alpha$. Next, we use the fact $\frac{1}{3}x_1 (t_0 +1) + \frac{2}{3} x_2 (t_0 +1) \geq 1$ and the formula \eqref{eq8-4} to find the following identity
\begin{equation}\label{eq8-6}
\begin{split}
\nonumber x_2(t_0+2) &= \frac{1}{3}x_1(t_0+1) + \frac{2}{3}x_2(t_0+1)+6\alpha x_2(t_0+1)
\\
&= \frac{1}{3}x_1(t_0+1) + \frac{2+18\alpha}{3}x_2(t_0+1).
\end{split}
\end{equation}
Using this and $x_1 (t_0 +1) =1$, we get
\begin{equation}\label{eq8-7}
x_2(t_0+2) -  \frac{1}{1-18\alpha} = \frac{2+18\alpha}{3}\left[x_2(t_0+1)-\frac{1}{1-18\alpha}\right].
\end{equation}
Since $(2+18\alpha)/3<1$, it follows that $x_2(t_0+2)< x_2(t_0+1)<1+30\alpha$ for $x_2(t_0+1)>\frac{1}{1-18\alpha}$, and $1\leq x_2(t_0+2)\leq \frac{1}{1-18\alpha}$ for $1\leq x_2(t_0+1)\leq \frac{1}{1-18\alpha}$. We can conclude that $x_1(t)=1$ and $x_2(t)<1+30\alpha$ for all $t\geq t_0+1$. In addition, \eqref{eq8-7} implies that $x_2(t)$ converges to $1/(1-18\alpha)$ exponentially fast.
\end{proof}
In  the above result, we note that for $\alpha \in (0, 1/45)$ we have 
\begin{equation}
\nonumber\Big| \frac{1}{1-18\alpha}-1\Big| = \frac{18\alpha}{1-18\alpha} \leq 30 \alpha.
\end{equation}
Therefore the above result shows that  $x_2(t)$ converges to a point within $1 + O(\alpha)$, implying that the sequence $\{(x_1 (t), x_2 (t))\}_{t \geq 0}$ converges to an $O(\alpha)$-neighborhood of the optimal point $x_* =1$. This result is sharper than the convergence to an $O(\sqrt{\alpha})$-neighborhood obtained in Theorem \ref{thm2-3}.

\subsection{Half-space domain}\label{sec5-2} 
In this section, we consider the case where the domain $\Omega$ is given by the half-space $\mathbb{R}^{d-1}\times \mathbb{R}_{+}$. Specifically, we present the proof of \mbox{Theorem \ref{thm-6-10}}.  

Throughout this section, we will use  the following notations:
\begin{itemize}
\item[-] $\tilde{x}_i (t) \in \mathbb{R}^{d-1}$ denotes the first $d-1$ coordinates of $x_i (t)$.
\item[-] $y (t) \in \mathbb{R}^{d-1}$ denotes the first $d-1$ coordinates of $\bar{x}(t)$.
\item[-] $\tilde{x}_* \in \mathbb{R}^{d-1}$ denotes the first $d-1$ coordinates of $x_*$.
\item[-] $\tilde{\nabla} f(\cdot)  \in \mathbb{R}^{d-1}$ denotes the first $d-1$ coordinates of ${\nabla} f(\cdot) $.
\end{itemize}
We also suppose that  Assumptions \ref{LS}, \ref{sc}, \ref{graph}, \ref{ass-5} and \ref{ass-5-21} hold in this section. 

\begin{lem}Let $\{x_i (t)\}_{t \geq 0}$ be the sequence generated by \eqref{scheme} with constant stepsize $\alpha >0$. Then there is a positive value $J \leq \frac{2}{L+\mu}$ such that for any $\alpha \in (0,J]$ we have a  time instant $T_{\alpha} \in \mathbb{N}$ satisfying the following estimate 
\begin{equation}
\nonumber\|x_i(t) - x_*\| \leq \frac{w}{2L}
\end{equation}
and
\begin{equation}\label{eq-5-71}
\frac{1}{n}\sum_{i=1}^{n} \partial_d f_i (x_i (t)) \geq \frac{w}{2}
\end{equation}
for any $t \geq T_{\alpha}$.
\end{lem}
\begin{proof}
By Theorem \ref{thm2-3}, there exist a time $T_{\alpha} \in \mathbb{N}$ such that 
$\| x_i (t)-x_* \| =O(\sqrt{\alpha})$ for $t \geq T_{\alpha}$ if the stepsize $\alpha >0$ satisfies $\alpha \leq \frac{2}{L+\mu}$. Therefore, there exists  a suitable value $J>0$ such that $\| x_i (t)-x_* \| \leq \frac{w}{2L}$ for $\alpha \in (0,J)$.

Since a local cost function $f_i$ is $L$-smooth and $x_i (t) \in \mathbb{R}^d$ satisfies $\|x_i  (t)-x_*\| \leq \frac{w}{2L}$ for $1\leq i \leq n$ and $t \geq T_{\alpha}$, we have
\begin{equation}
\begin{split}
\nonumber\frac{1}{n}\sum_{k=1}^n \partial_d f_i (x_i  (t))& = \frac{1}{n} \sum_{i=1}^n \partial_d f_i (x_*) + \frac{1}{n}\sum_{k=1}^n \Big(\partial_d f_i (x_i  (t))-\partial f_i (x_*)\Big)
\\
&  \geq w - L \Big( \frac{w}{2L}\Big) = \frac{\omega}{2}.
\end{split}
\end{equation}
 The proof is done.
\end{proof}
To verify that a sequence $x_k(t)$ converges to an $O(\alpha)$-neighborhood of the optimal point, we first show that there exists a time $T$ such that for all $t>T$ we have $\bar{x}(t)[d] \leq O(\alpha).$ For this aim, we introduce the following lemma.
\begin{lem}\label{lem-5-41}
Take any $\alpha \in (0,J]$ and all $t\geq T_{\alpha}$. Then we have the following results.
\begin{itemize}
\item[Case 1.] Assume that $\sum_{j=1}^{n} w_{ij} x_j (t) -\eta \nabla f_i (x_i (t))$ belongs to $\Omega$ for all $1 \leq i \leq n$. Then we have 
\begin{equation}
\bar{x}(t+1)[d] \leq \bar{x}(t)[d]  -\frac{\omega\alpha}{2}.
\end{equation}
\item[Case 2.] Assume that $\sum_{j=1}^{n} w_{ij} x_j (t) -\eta \nabla f_i (x_i (t))$ does not belong to $\Omega$ for some $1 \leq i \leq n$. Then we have
\begin{equation}
\bar{x}(t+1)[d] \leq \left(\beta^{t+1} \|\x(0) - \bx(0)\|^2 + \frac{3(L^2 R_s + nR_D^*)\alpha^2}{(1-\beta)^2}\right)^{1/2}.
\end{equation} 
\end{itemize}
\end{lem}
\begin{proof} 
We begin by proving the first case, where the projection operator becomes negligible in \eqref{scheme} so that 
\begin{equation*}
x_i (t+1) =\sum_{j=1}^{n} w_{ij} x_j (t) -\alpha \nabla f_i (x_i (t)) \quad \forall~1 \leq i \leq n.
\end{equation*}
Averaging this for $1 \leq i \leq n$ gives
\begin{equation*}
\bar{x}(t+1) = \bar{x}(t) - \frac{\alpha}{n}\sum_{i=1}^{n} \nabla f_i (x_i (t)).
\end{equation*}
Considering the $d$-th coordinate of the above formula and using \eqref{eq-5-71}, it follows that
\begin{equation*}
\begin{split}
\bar{x}(t+1)[d] &= \bar{x}(t)[d] - \frac{\alpha}{n}\sum_{i=1}^{n} \partial_d f_i (x_i (t))\leq \bar{x}(t)[d]  -\frac{\omega\alpha}{2}.
\end{split}
\end{equation*} 

Next, we prove the second case. In this case, we have some $1 \leq i \leq n$ such that
\begin{equation}
\nonumber\Big(\sum_{j=1}^{n} w_{ij} x_j (t) -\eta \nabla f_i (x_i (t)) \Big)[d] \leq 0,
\end{equation}
and consequently,
\begin{equation}
\nonumber x_i (t+1)[d] = P_{\Omega}\left[\sum_{j=1}^{n} w_{ij} x_j (t) -\eta \nabla f_i (x_i (t)) \right][d] =0.
\end{equation}
Therefore
\begin{equation}
\begin{split}
\nonumber(\bar{x}(t+1)[d])^2 & = |\bar{x}(t+1)[d]- x_{i}(t+1)[d]|^2
\\
&\leq \|\bar{x}(t+1) -x_i(t+1)\|^2
\\
&\leq  \beta^{t+1} \|\x(0) - \bx(0)\|^2 + \frac{3(L^2R_s  + nR_D^*)\alpha^2}{(1-\beta)^2},
\end{split}
\end{equation} 
where we used Theorem \ref{thm2-5} for the last inequality.
\end{proof}
In the following, we show that there exists a time $T$ such that for all $t>T$, we have $\bar{x}(t)[d] \leq O(\alpha).$

\begin{prop}\label{prop-5-61}
Let $t_0 \in \mathbb{N}$ be the smallest number such that $\frac{\omega\alpha}{2}t_0  >K$ with $K>0$ defined as
\begin{equation}
\nonumber K= \Big\{ \bar{x}(T_{\alpha})[d] ,~    \Big(\beta  \|\x(0) - \bx(0)\|^2 + \frac{3(L^2R_s + nR_D^*)\alpha^2}{(1-\beta)^2}\Big)^{1/2}\Big\},
\end{equation}
 and take $t_1\in \mathbb{N}$ satisfying $\beta^{(t_1+1)}\|\x(0)-\bx(0)\|^2 \leq \frac{3(L^2R_s + nR_D^*)\alpha^2}{(1-\beta)^2}$. Then for all $t >T=t_0 + \max\{T_{\alpha}, t_1\}$, we have 
\begin{equation}
\nonumber\bar{x}(t+1)[d] \leq  \frac{\sqrt{6(L^2R_s + nR_D^*)}\alpha}{1-\beta}.
\end{equation}
\end{prop}
\begin{proof}
For each $t \geq T_{\alpha}$, the result of Lemma \ref{lem-5-41} indicates that $\bar{x}(t+1)$ is bounded by 
\begin{equation}\label{eq-5-14}
\max \Big\{ \bar{x}(t)[d]- \frac{w\alpha}{2},~ \Big(\beta^{t+1} \|\x(0) - \bx(0)\|^2 + \frac{3(L^2R_s + nR_D^*)\alpha^2}{(1-\beta)^2}\Big)^{1/2}\Big\}.
\end{equation}
From this we find that for $t\geq  T_{\alpha}$,
\begin{equation}
\nonumber\bar{x}(t)[d] \leq \max\Big\{ \bar{x}(T_{\alpha})[d],~ \Big(\beta  \|\x(0) - \bx(0)\|^2 + \frac{3(L^2R_s + nR_D^*)\alpha^2}{(1-\beta)^2}\Big)^{1/2}\Big\} = K.
\end{equation}
Therefore we have
\begin{equation}\label{eq-5-20}
\bar{x}\Big(\max\{T_{\alpha},t_1\}\Big) [d] \leq K. 
\end{equation}
Also, the estimate \eqref{eq-5-14} for $t \geq \max\{T_{\alpha}, t_1\}$ yields
\begin{equation}
\nonumber\bar{x}(t+1)[d] \leq \max \Big\{ \bar{x}(t)[d]- \frac{w\alpha}{2},~   \frac{\sqrt{6(L^2 R_s +nR_D^*)}\alpha}{1-\beta}\Big\}.
\end{equation}
Combining this with  \eqref{eq-5-20} and the fact that $\frac{w\alpha}{2}t_0 >K$, we deduce the following estimate
\begin{equation}
\bar{x}(t+1)[d] \leq \frac{\sqrt{6(L^2 R_s +nR_D^*)}\alpha}{1-\beta}\quad \textrm{for}~ t \geq t_0 +\max\{T_{\alpha},t_1\}.
\end{equation}
 It completes the proof.
\end{proof} 
 
Now we investigate the convergence property  $\tilde{x}_k(t)$ towards the point $\tilde{x}_*$ for each $1\leq k \leq n$. We first investigate the convexity property of the function $f$ in terms of its first $d-1$ coordinates in the following lemma.
\begin{lem}\label{lem-5-71}
Let $T \in \mathbb{N}$ be defined in Proposition \ref{prop-5-61} and $y(t)$ be the first $d-1$ coordinates of $\bar{x}(t)$. Then, for all $t\geq T$, we have the following inequality:
\begin{equation*}
\begin{split}
&\Big\langle y(t)-\tilde{x}_*,~\tilde{\nabla} f(\bar{x}(t))-\tilde{\nabla} f(x_*)\Big\rangle  
\\
&\geq \frac{L\mu}{2(L+\mu)}\|\bar{x}(t)-x_*\|^2 + \frac{1}{L+\mu}\|\nabla f(\bar{x}(t)) - \nabla f(x_*)\|^2\\
&\qquad\qquad\qquad\qquad\qquad\qquad\qquad- \frac{2(L+\mu)}{\mu}\cdot \frac{9L(L^2R_s + nR_D^*) \alpha^2 }{(1-\beta)^2}.
\end{split}
\end{equation*}
\end{lem}
\begin{proof}
Using the fact that $f$ is $L$-smooth and Proposition \ref{prop-5-61}, we deduce for $t >T$ the following estimate
\begin{equation*}
\begin{split}
&\Big|\Big\langle  \bar{x}(t)[d]-x_* [d],~(\partial_d f(\bar{x}(t))-\partial_d f(x_*))\Big\rangle\Big|\\
&=\Big|\Big\langle \bar{x}(t)[d],~(\partial_d f(\bar{x}(t))-\partial_d f(x_*))\Big\rangle\Big|\\
&\leq \frac{\sqrt{6L(L^2R_s + nR_D^*)}\alpha}{1-\beta}\|\bar{x}(t) - x_*\|\\
&\leq \frac{2(L+\mu)}{\mu}\cdot \frac{6L (L^2R_s + nR_D^*) \alpha^2}{4(1-\beta)^2} +\frac{L\mu}{2(L+\mu)} \|\bar{x}(t)-x_*\|^2,
\end{split}
\end{equation*}
where we used Young's inequality $ 2ab \leq \kappa a^2+ \frac{b^2}{\kappa}$ with $\kappa = \frac{2(L+\mu)}{\mu}$ for the last inequality.  Using this, we have
\begin{equation}\label{eq-5-141}
\begin{split}
&\Big\langle\bar{x}(t)-x_*,~\nabla f(\bar{x}(t))-\nabla f(x_*)\Big\rangle
\\
&=\Big\langle\tilde{x}(t)-\tilde{x}_*,~\tilde{\nabla} f(\bar{x}(t))-\tilde{\nabla} f(x_*)\Big\rangle + \Big\langle \bar{x}(t)[d]-{x}_* [d],~\partial_d f(\bar{x}(t))-\partial_d f(x_*)\Big\rangle
\\
&\leq\Big\langle\tilde{x}(t)-\tilde{x}_*,~\tilde{\nabla} f(\bar{x}(t))-\tilde{\nabla} f(x_*)\Big\rangle +\frac{ (L+\mu)}{\mu}\cdot \frac{3L (L^2R_s + nR_D^*) \alpha^2}{(1-\beta)^2}\\& \qquad\qquad\qquad\qquad\qquad\qquad\qquad\qquad\qquad +\frac{L\mu}{2(L+\mu)} \|\bar{x}(t)-x_*\|^2.
\end{split}
\end{equation}
Since the total cost function, $f$ is $L$-smooth and $\mu$-strongly convex,  we have the following  inequality (see e.g., \cite[Lemma 3.11]{B}):
$$
\Big\langle\bar{x}(t)-x_*,~\nabla f(\bar{x}(t))-\nabla f(x_*)\Big\rangle \geq \frac{L\mu}{L+\mu}\|\bar{x}(t)-x_*\|^2 + \frac{1}{L+\mu}\|\nabla f(\bar{x}(t)) - \nabla f(x_*)\|^2.
$$
Combining this with \eqref{eq-5-141}, it follows that
\begin{equation*}
\begin{split}
&\Big\langle y(t)-\tilde{x}_*,~\tilde{\nabla} f(\bar{x}(t))-\tilde{\nabla} f(x_*)\Big\rangle  
\\
&\geq \frac{L\mu}{2(L+\mu)}\|\bar{x}(t)-x_*\|^2 + \frac{1}{L+\mu}\|\nabla f(\bar{x}(t)) - \nabla f(x_*)\|^2\\
&\qquad\qquad\qquad\qquad\qquad\qquad\qquad- \frac{ (L+\mu)}{\mu}\cdot \frac{3L(L^2R_s + nR_D^*) \alpha^2}{(1-\beta)^2},
\end{split}
\end{equation*}
which proves the lemma.
\end{proof}
Now we show that the sequence $\{y(t)\}_{t \geq 0}$ converges to an $O(\alpha)$-neighborhood of the optimal point $\tilde{x}_*$.
\begin{prop}\label{prop-5-81}
Let $T\in \mathbb{N}$ be defined in Proposition \ref{prop-5-61} and assume that $\alpha \in (0, J]$.  Then there exist constants $\nu_1>0$and $\nu_2>0$ such that for $t \geq T$ we have
\begin{equation}
\nonumber\big\|y(t)-\tilde{x}_*\big\|^2 \leq (1-\nu_1\alpha)^t \big\|y(0)-\tilde{x}_*\big\|^2 +\frac{\nu_2}{\nu_1}\alpha^2. 
\end{equation}
\end{prop}
\begin{proof}
We express the first $d-1$ coordinates of formula \eqref{scheme2} as:
\begin{equation}\label{eq-5-151}
\tilde{x}_i (t+1) = \sum_{j=1}^{n} w_{ij}\tilde{x}_j (t) - \alpha \tilde{\nabla} f_i (x_i (t)).
\end{equation}
Averaging this for $1 \leq i \leq n$, we get the following formula
 \eqref{eq-5-151} as
\begin{equation}\label{eq-5-171}
y(t+1) = y(t) - \alpha\tilde{\nabla} f(\bar{x}(t)) + \alpha \left(\tilde{\nabla}f(\bar{x}(t)) - \frac{1}{n}\sum^n_{i=1}\tilde{\nabla}f_i(x_i(t))\right).
\end{equation}
Using the fact that $f_i$ is $L$-smooth and Theorem \ref{thm2-5}, we have
\begin{equation}\label{eq-5-181}
\begin{split}
\left\|\tilde{\nabla}f(\bar{x}(t)) - \frac{1}{n}\sum^n_{i=1}\tilde{\nabla}f_i(x_i(t)) \right\|^2 &\leq \frac{1}{n}\sum^n_{i=1}\left\|\tilde{\nabla}f_i(\bar{x}(t) - \tilde{\nabla} f_i(x_i(t))\right\|^2 \\
&\leq \frac{L^2}{n}\sum^n_{i=1}\left\|\bar{x}(t) - x_i(t)\right\|^2\\
&\leq L^2 \beta^{t }\|\x(0)-\bx(0)\|^2 + \frac{3L^2 (L^2R_s + nR_D^*)\alpha^2}{1-\beta} \\
&\leq \frac{6L^2 (L^2R_s + nR_D^*)\alpha^2}{1-\beta},
\end{split}
\end{equation}
where we used the fact that $T >t_1$ with $t_1$ defined in Proposition \ref{prop-5-61} for the last inequality. Using \eqref{eq-5-171}, Young's inequality and \eqref{eq-5-181}, it follows that
\begin{equation}\label{eq-5-191}
\begin{split}
&\|y(t+1) - \tilde{x}_*\|^2\\ =& \left\|y(t) - \alpha f(\bar{x}(t)) + \alpha \left(\tilde{\nabla}f(\bar{x}(t)) - \frac{1}{n}\sum^n_{i=1}\tilde{\nabla}f_i(x_i(t))\right)\right\|^2 \\
\leq &(1+c\alpha)\left\| y(t) - \alpha \tilde f(\bar{x}(t)) - \tilde{x}_*\right\|^2 + \left(1+\frac{1}{c\alpha} \right)\frac{6L^2(L^2R_s + nR_D^*)}{(1-\beta)^2}\alpha^4,
\end{split}
\end{equation}
where we have let $c = \frac{L\mu}{2(L+\mu)}$.
Next, we estimate the first term in the last inequality in \eqref{eq-5-191}. By the first optimality condition, we have $\tilde{\nabla}f(x_*) = 0$. Using this it follows that
\begin{equation*}
\begin{split}
&\|y(t) -\alpha \tilde{\nabla} f(\bar{x}(t))-\tilde{x}_* \|^2 \\
&=\Big\|y(t)-\tilde{x}_*  -\alpha \Big(\tilde{\nabla} f(\bar{x}(t))-\tilde{\nabla} f(x_*)\Big)\Big\|^2 
\\
&=\big\|y(t)-\tilde{x}_*\big\|^2  + \alpha^2\Big\|\tilde{\nabla} f(\bar{x}(t))-\tilde{\nabla} f(x_*)\Big\|^2  -2 \alpha \Big\langle y(t)-\tilde{x}_*,~ \tilde{\nabla} f(\bar{x}(t))-\tilde{\nabla} f(x_*)\Big\rangle.
\end{split}
\end{equation*}
Applying Lemma \ref{lem-5-71} to the right -hand side, we get
\begin{equation*}
\begin{split}
&\| y(t) -\alpha\tilde{\nabla} f(\bar{x}(t))-\tilde{x}_* \|^2 
\\
 &\geq \Big(1 - \frac{L\mu}{L+\mu} \alpha\Big)\big\|y(t)-\tilde{x}_*\big\|^2  + \Big( \alpha^2 - \frac{2\alpha}{L+\mu}\Big)\Big\| \Big(\tilde{\nabla} f(\bar{x}(t))-\tilde{\nabla} f(x_*)\Big)\Big\|^2  
 \\
 &\quad  + \frac{2(L+\mu)}{\mu}\cdot \frac{3L(L^2R_s + nR_D^*) \alpha^3}{(1-\beta)^2}.
\end{split}
\end{equation*}
This inequality with the condition $\alpha \leq \frac{2}{L+\mu}$ gives
\begin{equation}\label{eq-5-201}
\begin{split}
&\|y(t) -\eta \tilde{\nabla} f(\bar{x}(t))-\tilde{x}_* \|^2 
\\ &= \Big(1 - \frac{L\mu}{L+\mu} \alpha\Big)\big\|y(t)-\tilde{x}_*\big\|^2 + \frac{2(L+\mu)}{\mu}\cdot \frac{3L (L^2R_s + nR_D^*) \alpha^3}{(1-\beta)^2}.
\end{split}
\end{equation}
Putting \eqref{eq-5-201} into \eqref{eq-5-191} we obtain
\begin{equation*}
\begin{split}
&\|y(t+1)- \tilde{x}_*\|^2
\\
&\leq (1+c\alpha)\Big(1 - \frac{L\mu}{L+\mu} \alpha\Big)\big\|y(t)-\tilde{x}_*\big\|^2 + (1+c\alpha)\frac{2(L+\mu)}{\mu}\cdot \frac{3L (L^2R_s + nR_D^*) \alpha^3}{(1-\beta)^2} \\
&\qquad\qquad\qquad\qquad\qquad +\left(1+\frac{1}{c\alpha} \right)\frac{6L^2(L^2R_s + nR_D^*) }{(1-\beta)^2}\alpha^4
\\
&\leq \Big(1 - \frac{L\mu}{2(L+\mu)} \alpha\Big)\big\|y(t)-\tilde{x}_*\big\|^2 + (1+c )\frac{2(L+\mu)}{\mu}\cdot \frac{3L (L^2R_s + nR_D^*) \alpha^3}{(1-\beta)^2} \\
&\qquad\qquad\qquad\qquad\qquad +\left(1+\frac{1}{c} \right)\frac{6L^2(L^2R_s + nR_D^*) }{(1-\beta)^2}\alpha^3,
\end{split}
\end{equation*}
where the second inequality follows using that $\alpha \leq 1$.
This estimate can be written in the following form
\begin{equation}
\nonumber\|y(t+1)- \tilde{x}_*\|^2
\leq (1-\nu_1\alpha )\big\|y(t)-\tilde{x}_*\big\|^2 + \nu_2\alpha^3,
\end{equation}
where $\nu_1 := \frac{L\mu}{2(L+\mu)}$ and
\begin{equation}
\nonumber\nu_2:= (1+c)\frac{2(L+\mu)}{\mu}\cdot \frac{3L (L^2R_s + nR_D^*)  }{(1-\beta)^2}+\left(1+\frac{1}{c} \right)\frac{6L^2(L^2R_s + nR_D^*) }{(1-\beta)^2}.
\end{equation} 
Using this iteratively, we get
\begin{equation}
\begin{split}
\nonumber\big\|y(t)-\tilde{x}_*\big\|^2 &\leq (1-\nu_1\alpha)^t \big\|y(0)-\tilde{x}_*\big\|^2 +\nu_2\alpha^3 \sum_{s=0}^{t-1} (1-\nu_1\alpha)^s
\\
&\leq (1-\nu_1\alpha)^t \big\|\tilde{x}(0)-\tilde{x}_*\big\|^2 +  \frac{\nu_2}{\nu_1}\alpha^2,
\end{split}
\end{equation}
which completes the proof.
\end{proof}
We are ready to prove Theorem \ref{thm-6-10}.
\begin{proof}[Proof of Theorem \ref{thm-6-10}]
First we consider the case that $\alpha \in (0,J)$ since $x_* \in \partial (\mathbb{R}^{d-1}\times \mathbb{R}^+)$, we have the following equality
\begin{equation}\label{eq-5-33}
\|x_i(t) - x_*\|^2 =  \|\tilde{x}_i(t) - \tilde{x}_*\|^2 + \|x_i(t)[d]\|^2.
\end{equation}
By Theorem \ref{thm2-5} and Proposition \ref{prop-5-81}, for any $t\geq T$, we have
\begin{equation*}
\begin{split} 
&  \|\tilde{x}_i(t) - \tilde{x}_*\|^2  
\\
&\leq 2\|\tilde{x}_i(t) -y(t)\|^2 +2\|y(t) - \tilde{x}_*\|^2
\\
&\leq 2\beta^t\|\x(0) -\bx(0)\|^2 + 2 (1-\nu_1\alpha)^t\|y(0)-\tilde{x}_*\|^2 +   \left( \frac{ 6(L^2R_s + nR_D^*) }{(1-\beta)^2} + \frac{2 \nu_2}{\nu_1} \right)\alpha^2,
\end{split}
\end{equation*}
By Proposition  \ref{prop-5-61}, 
\begin{equation*}
\begin{split}
 \|x_i(t)[d]\|^2 &\leq2\|x_i(t)[d] -\bar{x}(t)\|^2 +2\|\bar{x}(t)\|^2  \\
&\leq 2\beta^t\|\x(0) -\bx(0)\|^2  +  \frac{12(L^2R_s + nR_D^*)  }{(1-\beta)^2}   \alpha^2.
\end{split}
\end{equation*}
Inserting the above two estimates in \eqref{eq-5-33} we get
\begin{equation}
\begin{split}
\nonumber&\|x_i(t) - x_*\|^2 \leq  4\beta^t\|\x(0) -\bx(0)\|^2 + 2 (1-\nu_1\alpha)^t\|y(0)-\tilde{x}_*\|^2 + \\
&\qquad\qquad\qquad+\left( \frac{ 18(L^2R_s + nR_D^*) }{(1-\beta)^2} + \frac{2 \nu_2}{\nu_1} \right)\alpha^2.
\end{split}
\end{equation}
This gives for $\alpha \in (0,J)$ the following estimate
\begin{equation}
\nonumber\underset{t \rightarrow \infty}{\textrm{limsup}} \,\|x_i(t) - x_*\|^2  = O(\alpha^2).
\end{equation}
For $J \leq \alpha \leq \frac{2}{L+\mu}$, we may use the result of Theorem \ref{thm2-3} to derive 
\begin{equation}
\nonumber\underset{t \rightarrow \infty}{\textrm{limsup}}  \, \|x_i(t) - x_*\|^2  \leq C\alpha \leq \frac{C}{J} \alpha^2.
\end{equation}
Combining the above two estimates completes the proof.
\end{proof}

\section{Simulations}\label{sec6}
In this section, we conduct numerical experiments to validate the DPG algorithm. These experiments include non-negative least squares (Subsection \ref{sec6-1}), constrained logistic regression (Subsection \ref{sec6-3}), and an example in \ref{sec5-1} (Subsection \ref{sec6-2}). In Subsections \ref{sec6-1} and \ref{sec6-3}, we construct a connected graph based on the Watts and Strogatz model \cite{WS}, and employ a Laplacian-based constant edge weight matrix denoted as $W$ \cite{S, XB}. This matrix is defined by $W = \mathbf{I} - \mathcal{L}/\tau$ where $\mathcal{L}$ is the Laplacian matrix and $\tau$ is a constant that satisfies $\tau>\frac{1}{2}\lambda_{\max}(\mathcal{L})$. If $\lambda_{\max}(\mathcal{L})$ is not available, one can use $\tau=\max_{1\leq i \leq n}D_{ii}(\mathcal{L})$ instead. This construction satisfies Assumptions \ref{graph} and \ref{ass-1-1}. In addition, we compare the DPG algorithm with the algorithm presented in \cite{DMDT}, a version of the DIGing algorithm \cite{nedic2017} for the constrained problem. We refer to the algorithm in \cite{DMDT} as P-DIGing in Subsections \ref{sec6-1} and \ref{sec6-3}.

\subsection{Non-negative least squares}\label{sec6-1} We consider the following decentralized non-negative least squares problem with $n$ agents
$$
\min_{x\in\Omega} \frac{1}{n} \sum^n_{i=1}\frac{1}{2}\|q_i - p_i^Tx\|^2,
$$ 
where $\Omega = \{\left(x[1],x[2],\cdots,x[d]\right)~|~x[k]\in \mathbb{R}_+\}.$ In this case, the projection operator $\mathcal{P}_{\Omega}$ for a point $x=\left(x[1],x[2],\cdots, x[d]\right)$ is defined by
\begin{equation*}
\mathcal{P}_{\Omega}[x] = 
\left\{\begin{array}{ll}
0 \ \text{if $x[k] < 0$},&\\
x[k], \ \text{otherwise},& 
\end{array}\right.
\end{equation*}
for all $1\leq k \leq d$. We initialized the points $x_i(0)$ by choosing independent random variables generated from a standard Gaussian distribution after applying the projection operator. The variables $p_i\in \mathbb{R}^{d\times p}$ and $q_i\in\mathbb{R}^{p}$ are randomly chosen from the uniform distribution on $[0,1]$. In this simulation, we set the problem dimensions and the number of agents as $d=10$, $p=5$, and $n=30$. In addition, each agent has four neighbors based on the Watts and Strogatz model. We consider the relative convergence error 
$$
R(t) = \frac{\sum^n_{i=1}\|x_i(t) - x_*\|}{\sum^n_{i=1}\|x_i(0) - x_*\|},
$$
and examine the graphs of $R(t)$ with various constant stepsizes and diminishing stepsizes. We compute the values for the constants $L$ and $\mu$ by
$$
L = \max_{1\leq i \leq n} \lambda_1 \Big(p_ip_i^T\Big) ~ \text{and} ~ \mu =\lambda_n \Big( \frac{1}{n} \sum_{i=1}^n p_i p_i^T\Big),$$
where $\lambda_k$ denotes the $k$-th largest eigenvalue  of a matrix. 

For constant stepsizes, we consider $\alpha\in \{0.5/(L+\mu), 1/(L+\mu), 2/(L+\mu), 3/(L+\mu), 1/\tilde{L}\}$ where the stepsize $1/\tilde{L}$ corresponds to the theoretical critical stepsize mentioned in \cite{LLSX} with $\tilde{L}=\frac{1}{n}\sum^n_{i=1}\lambda_1 \Big(p_ip_i^T\Big)$. 
As for the decreasing stepsize  $\alpha (t)  = v/(t+w)^p$, we choose the constant $w$ can be determined by $w = (L+\mu)^{1/p}$ obeying the condition of Theorem \ref{thm2-6} and test for $p \in\{0.25, 0.5, 0.75, 1\}$.
The numerical results for constant and diminishing stepsizes are illustrated in Figure \ref{figure1-1}. 
\begin{figure}[!htbp]
\begin{center}
\includegraphics[width=6.45cm]{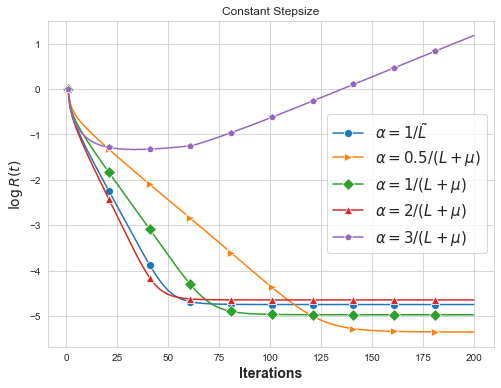}
\includegraphics[width=6.45cm]{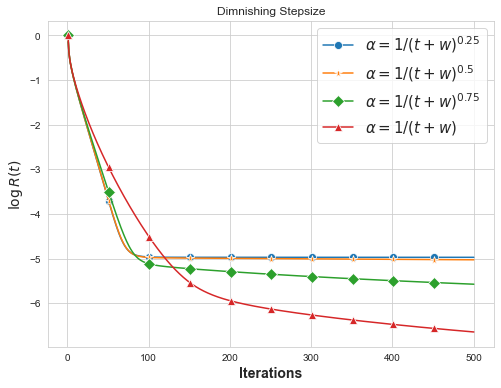}
\end{center}
\caption{The graphs of $\log R(t)$ under various choices of constant stepsizes (left), diminishing stepsizes (right). } 
\label{figure1-1}
\end{figure}
We observe that with a constant stepsize, the value of $R(t)$ converges to a neighborhood of the solution, and the size of the neighborhood depends on the chosen stepsize as long as it meets the condition outlined in Theorem \ref{thm2-3}. It is also shown that the DPG with the stepsize $\alpha = 3/(L+\mu)$ diverges. Additionally, the right figure in Figure \ref{figure1-1} shows that with diminishing stepsizes, the values of $R(t)$ converge to zero, which is expected in Theorem \ref{thm2-6}.

Subsequently, we conduct a comparative analysis between the constant stepsize, the diminishing stepsize, and the P-DIGing algorithm \cite{DMDT}. It was shown that the P-DIGing algorithm with a constant stepsize converges to the optimizer. This is a stronger point in comparison with the DPG algorithm, but the range of the stepsize for the convergence for the P-DIGing tends to be smaller than that of the DPG, as shown in the graphs of Figure \ref{figure1-2}. This motivates us to design a hybrid algorithm 'the DPG+P-DIGing', which employs the DPG with a constant stepsize during the initial stages, and then changes to perform the P-DIGing algorithm. We empirically determine an optimal constant step size such that the P-DIGing algorithm converges fast. 
The numerical results for constant and diminishing stepsizes are illustrated in Figure \ref{figure1-2}.
\begin{figure}[!htbp]
\begin{center}
\includegraphics[width=6.45cm]{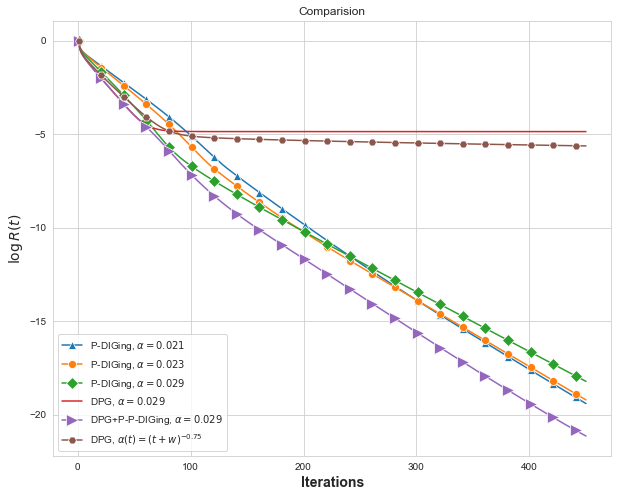}
\end{center}
\caption{The grphs of $\log R(t)$ with P-DIGing, DPG using a constant step size, DPG+P-DIGing, and DPG with a diminishing step size.} 
\label{figure1-2}
\end{figure}
We observe that the DPG exhibits faster convergence during iterations $t$ up to around $75$ while the P-DIGing converges exactly to the optimizer. Combining these two strength points, the DPG+P-DIGing algorithm is shown to have the best performance.

\subsection{Constrained logistic regression}\label{sec6-3}
We consider the following decentralized logistic regression problem with the MNIST dataset \cite{deng2012mnist}:
\begin{equation*}
\min_{x\in\Omega}\sum^n_{i=1}f_i(x),
\end{equation*}
where $f_i(x) = \sum^k_{j=1}\log[1+\exp\left((-x^T\tau_j)\phi_j\right)] + \frac{\alpha}{2}\|x\|^2$ and $\Omega = [-1,5]^{784}$.
Here $k$ represents the number of data points, $\tau_j\in\mathbb{R}^{784}$ is the feature vector and $\phi_j \in\{-1,1\}$ is the corresponding class. For the experiment, we select only the number 1 and 2 classes in the MNIST data and rename these classes as -1 and 1, respectively. In this experiment, we set the number of agents as $n=20$, with each agent having four neighbors. Additionally, we randomly select 1000 data points from each of class 1 and class 2, respectively, and equally divide this data among the agents. Consequently, each agent is assigned 50 data points from both class 1 and class 2. To obtain $x_*$, we employ the centralized projected gradient descent algorithm. We measure the quantity $R(t)$ for DPG with a constant stepsize as well as the P-DIGing algorithm (see Figure \ref{figure2-1}).
\begin{figure}[!htbp]
\begin{center}
\includegraphics[width=7cm]{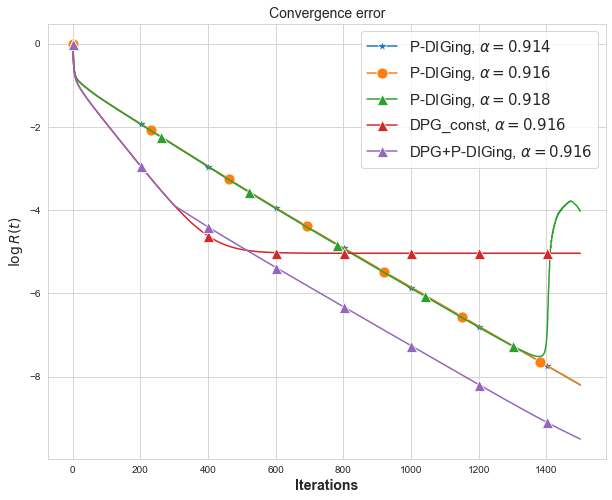}
\end{center}
\caption{The graph of $\log R(t)$ with P-DIGing, DPG using a constant step size, DPG+P-DIGing.}
\label{figure2-1}
\end{figure}
Similar to Subsection \ref{sec6-1}, we empirically select an appropriate constant step size for P-DIGing. As we expected, the DPG algorithm exhibits a faster convergence rate than the P-DIGing algorithms for first iterations up to around $500$. Consequently, the DPG+P-DIGing algorithm is shown to have a faster convergence compared to the P-DIGing algorithm.

\subsection{The example in Subsection \ref{sec5-1}}\label{sec6-2}
We provide a numerical test for the example considered in Subsection \ref{sec5-1}. Specifically, we test the convergence property of the algorithm \eqref{eq8-4}. To verify the result of Theorem \ref{thm8-1}, we consider the sequence $(x_1 (t), x_2(t))$ of \eqref{eq8-4} and the following measure
\begin{equation}
\nonumber\tilde{R}(t) = \frac{(x_1 (t)-1)^2 + \Big(x_2 (t) - \frac{1}{1-18\alpha}\Big)^2}{(x_1 (0)-1)^2 + \Big(x_2 (0) - \frac{1}{1-18\alpha}\Big)^2}\quad\textrm{for}~ t \geq 0.
\end{equation}
\begin{figure}[!htbp]
\includegraphics[width=6.45cm]{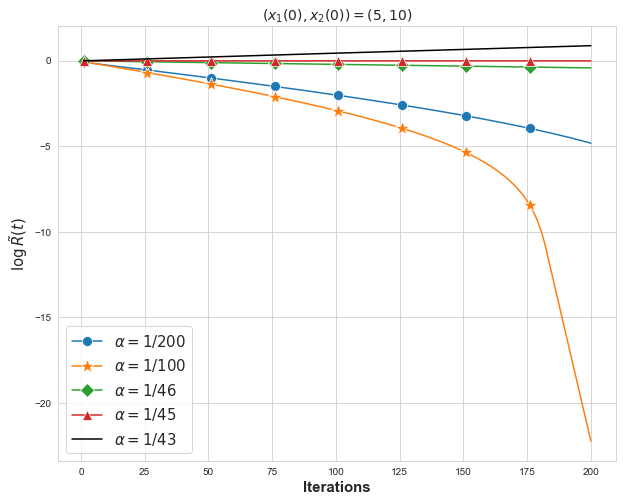}
\includegraphics[width=6.45cm]{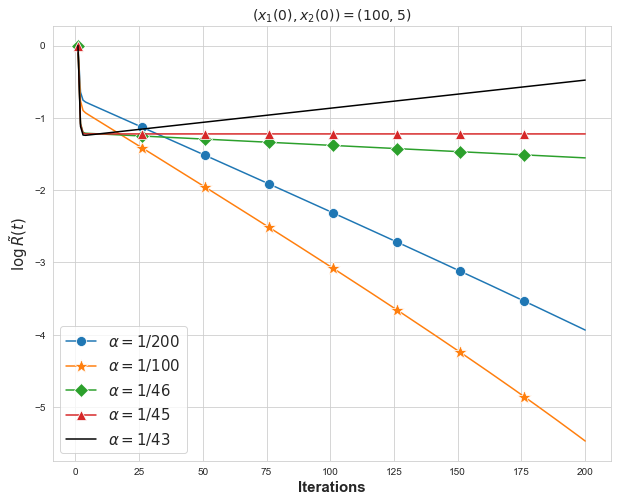}
\caption{The left and right graphs represent the value $\tilde{R}(t)$ for the initial values $(x_1 (0), x_2(0))=(5,10)$ and $(100,5)$, respectively.}
\label{figure5}
\end{figure}
We test with the stepsizes $\{1/200, 1/100, 1/46, 1/45, 1/43\}$ and the initial values given as
 \begin{equation}
 \nonumber(x_1 (0), x_2 (0)) \in \{(5,10), (100,5)\}.
\end{equation}
The graph of the measure $\tilde{R}(t)$ is provided in Figure  \ref{figure5}. The result shows that the algorithm \eqref{eq8-4} converges to the value $(1, 1/(1-18\alpha))$ for the stepsizes $\{1/200, 1/100, 1/46\}$ as expected by Theorem \ref{thm8-1}. Meanwhile, the algorithm \eqref{eq8-4} does not converge for the stepsize $\alpha \in\{1/43,1/45\}$ which is not supported in the interval $(0,1/45)$ guaranteed by Theorem \ref{thm8-1}. These results verify the sharpness of the result of Theorem \ref{thm8-1}.

\section{Conclusion}In this paper, we have introduced new convergence estimates for the decentralized projected gradient method. Our findings guarantee that when the algorithm employs a fixed step size $\alpha$ (where $\alpha$ is a positive constant) below a certain threshold, it converges to an  $O(\sqrt{\alpha})$ neighborhood of the optimal solution exponentially fast. Furthermore, we have established exact convergence results for the case with diminishing stepsize  $\alpha(t) = \frac{v}{(t + w)^p}$ for $p\in(0, 1]$. We also further improved the convergence result up to the $O(\alpha)$-error for a one-dimensional example and for a general class of functions defined on  the domain $\Omega = \mathbb{R}^{d-1} \times \mathbb{R}_{+}$ with any $d \geq 1$
\appendix
\section{Proof of Sequential estimates}\label{secA}
In this appendix, we provide the proofs for the sequential estimates of Proposition \ref{prop-3-1} and Proposition \ref{prop-3-2}. Proposition \ref{prop-3-1} is concerned about the consensus estimate and is established based on a well-known result of Lemma \ref{lem-1-1} regarding the consensus matrix $W$ along with Lemma \ref{lem-1-7} related to projection operator and Lemma \ref{lem-1-5} exploiting the $L$-smooth property of the cost functions.  Proposition \ref{prop-3-2} provides the sequential estimate between the points $x_i (t)$ and the optimal point $x_*$. It is established based on the estimate \eqref{eq-3-16} and Lemma \ref{lem-1-5} along with Lemma \ref{lem-1-4} exploiting the strong convexity of the aggregate cost function $f$. The overall flow of the proofs is summarized in \mbox{Figure \ref{diag}.}


\begin{figure}[!htbp]
\begin{center}
\includegraphics[width=9cm]{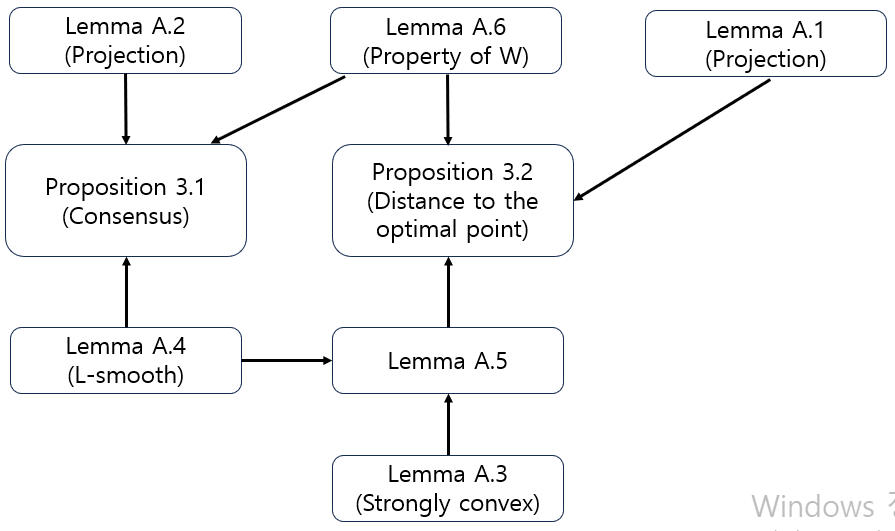}
\end{center}
\caption{The overall flows of the proofs for Proposition \ref{prop-3-1} and Proposition \ref{prop-3-2}.}
\label{diag}
\end{figure}

\subsection{Projection operator}\label{secA-1}
To obtain sequential estimates, it is important to know the properties of {the projection operator $\mathcal{P}_{\Omega}$, which is defined in \eqref{projection}.}
This projection operator has the following properties:
\begin{lem}\label{lem-1-2}
Let $\Omega\subset \mathbb{R}^d$ be convex and closed. 
\begin{enumerate}
\item For any $x, y \in \mathbb{R}^d$, we have
\begin{equation}\label{eq-2-2}
\|\mathcal{P}_{\Omega} [x]-\mathcal{P}_{\Omega}[y]\| \leq \|x-y\|.
\end{equation}
\item For any $x\in \mathbb{R}^d$ and $y\in\Omega$, we have
\begin{equation}\label{eq-2-1}
\|\mathcal{P}_{\Omega}[x] - y\| \leq \|x-y\|.
\end{equation}
\end{enumerate}
\end{lem}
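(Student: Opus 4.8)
The plan is to derive both inequalities from a single ingredient: the first-order optimality condition (the ``obtuse angle'' criterion) for the convex minimization problem defining the projection. So the first step I would carry out is to record and justify this characterization: for any $x \in \mathbb{R}^d$, the point $z = \mathcal{P}_{\Omega}[x]$ is the unique element of $\Omega$ satisfying
\begin{equation*}
\langle x - z, w - z \rangle \leq 0 \qquad \text{for all } w \in \Omega .
\end{equation*}
This holds because $z$ minimizes the strictly convex coercive function $w \mapsto \tfrac12\|w-x\|^2$ over the nonempty closed convex set $\Omega$, so the minimizer exists and is unique; and for any $w \in \Omega$ and $t \in (0,1]$ convexity gives $z + t(w-z) \in \Omega$, whence comparing the objective values at $z$ and at $z + t(w-z)$, dividing by $t$, and letting $t \to 0^+$ produces the displayed inequality.

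For part (2) I would apply this characterization with the admissible test point $w = y \in \Omega$, giving $\langle x - \mathcal{P}_{\Omega}[x],\, y - \mathcal{P}_{\Omega}[x]\rangle \leq 0$. Expanding
\begin{equation*}
\|x - y\|^2 = \|x - \mathcal{P}_{\Omega}[x]\|^2 + 2\langle x - \mathcal{P}_{\Omega}[x],\, \mathcal{P}_{\Omega}[x] - y\rangle + \|\mathcal{P}_{\Omega}[x] - y\|^2
\end{equation*}
and noting the cross term equals $-2\langle x - \mathcal{P}_{\Omega}[x],\, y - \mathcal{P}_{\Omega}[x]\rangle \geq 0$, I conclude $\|x-y\|^2 \geq \|\mathcal{P}_{\Omega}[x] - y\|^2$, which is \eqref{eq-2-1}.

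For part (1), write $u = \mathcal{P}_{\Omega}[x]$ and $v = \mathcal{P}_{\Omega}[y]$, and apply the characterization twice: once with data $(x,u)$ and test point $w = v \in \Omega$, once with data $(y,v)$ and test point $w = u \in \Omega$, obtaining $\langle x-u, v-u\rangle \leq 0$ and $\langle y-v, u-v\rangle \leq 0$. Adding these and rearranging gives $\langle (x-y)-(u-v),\, u-v\rangle \geq 0$, i.e. $\|u-v\|^2 \leq \langle x-y,\, u-v\rangle$, and then Cauchy--Schwarz bounds the right side by $\|x-y\|\,\|u-v\|$, yielding $\|u-v\| \leq \|x-y\|$, which is \eqref{eq-2-2}. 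There is no genuine obstacle here; the only point deserving a little care is the justification of the variational inequality itself — namely that $\Omega$ being closed and convex guarantees existence and uniqueness of the minimizer and that the one-sided directional derivative computation is legitimate — and I would simply state these standard facts and use them.
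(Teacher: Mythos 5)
Your proof is correct and complete, but it is more self-contained than what the paper does: the paper simply cites Theorem 1.5.5 of Facchinei--Pang for the nonexpansiveness estimate \eqref{eq-2-2} and then obtains \eqref{eq-2-1} as a one-line corollary by taking $y\in\Omega$ in \eqref{eq-2-2}, using that $\mathcal{P}_{\Omega}[y]=y$ for $y\in\Omega$. You instead prove everything from scratch via the variational characterization $\langle x-\mathcal{P}_{\Omega}[x],\,w-\mathcal{P}_{\Omega}[x]\rangle\leq 0$ for all $w\in\Omega$, derive \eqref{eq-2-1} directly from it (your expansion in fact gives the slightly stronger Pythagorean-type bound $\|x-y\|^2\geq\|x-\mathcal{P}_{\Omega}[x]\|^2+\|\mathcal{P}_{\Omega}[x]-y\|^2$), and get \eqref{eq-2-2} by adding the two variational inequalities and applying Cauchy--Schwarz (with the harmless case $\mathcal{P}_{\Omega}[x]=\mathcal{P}_{\Omega}[y]$ handled trivially). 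Both routes are standard; yours buys independence from the external reference and a sharper version of part (2), while the paper's buys brevity and makes explicit the logical dependence of (2) on (1).
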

\begin{proof}
We refer to Theorem 1.5.5 in \cite{FP} for the proof of estimate \eqref{eq-2-2}. Taking $y\in\Omega$ in \eqref{eq-2-2} leads to the estimate \eqref{eq-2-1}. 
\end{proof}

\begin{lem}\label{lem-1-7}
Let $\Omega\subset \mathbb{R}^d$ be convex and closed. Then, for any $x_1,\cdots, x_n\in \mathbb{R}^d$, we have
\begin{equation*}
\sum^n_{i=1} \Bigg\| \mathcal{P}_{\Omega}[x_i] - \frac{1}{n}\sum^n_{j=1}\mathcal{P}_{\Omega}[x_j]\Bigg\|^2\leq \sum^n_{i=1} \left\| x_i - \frac{1}{n}\sum^n_{j=1}x_j \right\|^2.
\end{equation*}
\end{lem}
\begin{proof}
For given $\{a_i\}_{i=1}^n \subset \mathbb{R}^d$, consider the function $F: \mathbb{R}^d \rightarrow \mathbb{R}$ defined by
\begin{equation*}
F(x) = \sum_{i=1}^n \|a_i -x\|^2,
\end{equation*}
This function is minimized when $x = \frac{1}{n} \sum_{i=1}^n a_i$, and using this we find
\begin{equation*}
\sum^n_{i=1} \Bigg\| \mathcal{P}_{\Omega}[x_i] - \frac{1}{n}\sum^n_{j=1}\mathcal{P}_{\Omega}[x_j]\Bigg\|^2\leq \sum^n_{i=1} \Bigg\| \mathcal{P}_{\Omega}[x_i] - \mathcal{P}_{\Omega}\bigg[\frac{1}{n}\sum^n_{j=1}x_j\bigg]\Bigg\|^2.
\end{equation*}
Combining this with \eqref{eq-2-2}, we get the desired inequality.
\end{proof}
\subsection{Preparation}\label{secA-2}
In this subsection, we present several results to prove Propositions \ref{prop-3-1} and \ref{prop-3-2}. First, we introduce Lemma \ref{lem-1-4}, which will serve as a foundation for deriving Lemma \ref{lem-3-8}. Subsequently, we introduce Lemmas \ref{lem-1-5}, \ref{lem-3-8}, and \ref{lem-1-1}, all of which will be used to establish Propositions \ref{prop-3-1} and \ref{prop-3-2} in Subsection \ref{secA-3}.
\begin{lem}\label{lem-1-4}
Suppose that Assumptions \ref{LS} and \ref{sc} hold. If $\alpha(t)\leq \frac{2}{L+\mu}$ for all $t\geq 0 $, then we have
\begin{equation}\label{eq3-6}
\left\| \bar{x}(t)-x_*- \frac{\alpha(t)}{n}\sum^n_{i=1}( \nabla f_i(\bar{x}(t))- \nabla f_i(x_*))\right\|^2    \leq \bigg(1-\frac{\mu\alpha(t)}{2} \bigg)^2\| \bar{x}(t)-x_*\|^2 .
\end{equation}
\end{lem}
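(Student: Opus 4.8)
\emph{Proof proposal.} The plan is to read the left-hand side as the (squared) contraction factor of one exact gradient-descent step on the aggregate cost $f$, and then invoke the classical contraction estimate for smooth strongly convex functions. First I would set $y := \bar x(t)$ and $\alpha := \alpha(t)$, and note that $y \in \Omega$ because each $x_i(t)\in\Omega$ and $\Omega$ is convex. Since $\nabla f = \frac1n\sum_{i=1}^n \nabla f_i$, the vector inside the norm is exactly $(y-x_*) - \alpha\big(\nabla f(y)-\nabla f(x_*)\big)$, and expanding the square gives
\begin{equation*}
\|(y-x_*) - \alpha(\nabla f(y)-\nabla f(x_*))\|^2 = \|y-x_*\|^2 - 2\alpha\langle y-x_*,\nabla f(y)-\nabla f(x_*)\rangle + \alpha^2\|\nabla f(y)-\nabla f(x_*)\|^2.
\end{equation*}

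Next I would apply the standard inequality for a function that is simultaneously $\mu$-strongly convex (Assumption \ref{sc}) and $L$-smooth (Assumption \ref{LS}, noting $L=\max_i L_i$ so that $f$ is $L$-smooth), evaluated at the two points $y,x_*\in\Omega$:
\begin{equation*}
\langle y-x_*,\nabla f(y)-\nabla f(x_*)\rangle \ \geq\ \frac{\mu L}{\mu+L}\|y-x_*\|^2 + \frac{1}{\mu+L}\|\nabla f(y)-\nabla f(x_*)\|^2.
\end{equation*}
Substituting this bound yields
\begin{equation*}
\|(y-x_*) - \alpha(\nabla f(y)-\nabla f(x_*))\|^2 \ \leq\ \Big(1-\frac{2\mu L\alpha}{\mu+L}\Big)\|y-x_*\|^2 + \alpha\Big(\alpha-\frac{2}{\mu+L}\Big)\|\nabla f(y)-\nabla f(x_*)\|^2.
\end{equation*}

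The hypothesis $\alpha\leq\frac{2}{L+\mu}$ makes the coefficient $\alpha(\alpha-\frac{2}{\mu+L})$ nonpositive, so the last term is discarded. It then remains to verify $1-\frac{2\mu L\alpha}{\mu+L}\leq\big(1-\frac{\mu\alpha}{2}\big)^2$. Since a function that is both $\mu$-strongly convex and $L$-smooth must satisfy $\mu\leq L$, we have $\frac{2L}{\mu+L}\geq 1$, hence $1-\frac{2\mu L\alpha}{\mu+L}\leq 1-\mu\alpha\leq 1-\mu\alpha+\frac{\mu^2\alpha^2}{4}=\big(1-\frac{\mu\alpha}{2}\big)^2$, which completes the argument.

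There is no real obstacle here — the computation is short and the estimate is a textbook one. The only points that genuinely need care are (i) that the smooth/strongly-convex cross-term inequality above is being used for a function defined on the (possibly unbounded) convex set $\Omega$ rather than on all of $\mathbb{R}^d$, so one should make sure it is stated in that generality; and (ii) remembering $\mu\leq L$ in the final elementary comparison of quadratics in $\alpha$. This lemma is the ``centralized contraction'' building block, which will later be combined with the consensus error $\|\mathbf{x}(t)-\bar{\mathbf{x}}(t)\|$ to drive the convergence estimates.
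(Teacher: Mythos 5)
Your proposal is correct and follows essentially the same route as the paper: expand the square, apply the standard strong-convexity/smoothness cross-term inequality (the paper cites Lemma 3.11 of \cite{B}), discard the gradient term via $\alpha\leq\frac{2}{L+\mu}$, and conclude with $2L\geq L+\mu$ so that $1-\frac{2L\mu\alpha}{L+\mu}\leq\bigl(1-\frac{\mu\alpha}{2}\bigr)^2$. No gaps.
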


\begin{proof}
We expand the left hand side in \eqref{eq3-6} as
\begin{equation}\label{eq3-5}
\| \bar{x}(t)-x_*\|^2 - 2\alpha(t)\Big\langle \bar{x}(t)-x_*, \nabla f(\bar{x}(t))- \nabla f(x_*)\Big\rangle + \alpha(t)^2\left\| \nabla f(\bar{x}(t))- \nabla f(x_*)\right\|^2,
\end{equation}
where $f(x) = \frac{1}{n}\sum^n_{i=1} f_i(x).$ Note that since $f$ is $L$-smooth and $\mu$-strongly convex Assumptions \ref{LS} and \ref{sc}, we have the following  inequality (see e.g., \cite[Lemma 3.11]{B}):
$$
\Big\langle\bar{x}(t)-x_*,~\nabla f(\bar{x}(t))-\nabla f(x_*)\Big\rangle \geq \frac{L\mu}{L+\mu}\|\bar{x}(t)-x_*\|^2 + \frac{1}{L+\mu}\|\nabla f(\bar{x}(t)) - \nabla f(x_*)\|^2.
$$
Putting the above inequality in \eqref{eq3-5}, we get
\begin{equation*}
\begin{split}
&\left \| \bar{x}(t)-x_*- \frac{\alpha(t)}{n}\sum^n_{i=1} ( \nabla f_i(\bar{x}(t))- \nabla f_i(x_*))\right\|^2 \\
&\leq \bigg(1-\frac{2L\mu\alpha(t)}{L+\mu} \bigg)\| \bar{x}(t)-x_*\|^2  + \alpha(t)\bigg(\alpha(t)-\frac{2}{L+\mu}\bigg)\|\nabla f (\bar{x}(t))-\nabla f(x_*)\|^2.
\end{split}
\end{equation*}
Using the assumption $\alpha(t)\leq \frac{2}{L+\mu}$ and $2L\geq L+\mu$, we then have
\begin{equation*}
\begin{split}
\left\| \bar{x}(t)-x_*- \frac{\alpha(t)}{n}\sum^n_{i=1}( \nabla f_i(\bar{x}(t))- \nabla f_i(x_*))\right\|^2 & \leq \bigg(1-\frac{2L\mu\alpha(t)}{L+\mu} \bigg)\| \bar{x}(t)-x_*\|^2 
\\
&\leq \bigg(1-\frac{\mu\alpha(t)}{2} \bigg)^2\| \bar{x}(t)-x_*\|^2,
\end{split}
\end{equation*}
which completes the proof.
\end{proof}

\begin{lem}\label{lem-1-5}
Suppose that Assumption \ref{LS} holds. For $(x_1, \cdots, x_n ) \in \mathbb{R}^{dn}$ and $\bar{x} = \frac{1}{n} \sum_{k=1}^n x_k$  we have
\begin{equation}\label{eq3-7}
\sum^n_{i=1}\|  \nabla f_i({x}_i) - \nabla f_i(\bar{x})  \|^2 \leq L^2\|\mathbf{x} - \bar{\mathbf{x}}\|^2\end{equation}
and
\begin{equation}\label{eq3-8}
\sum_{i=1}^n \Big\| \nabla f_i({x}_i) - \frac{1}{n} \sum_{l=1}^n \nabla f_l ({x}_l) \Big\|^2 \leq  3L^2 \| \mathbf{x} - \bar{\mathbf{x}}\|^2 +3L^2 \| \bar{\mathbf{x}}- \mathbf{x}_*\|^2 + 3nD^2.
\end{equation}
\end{lem}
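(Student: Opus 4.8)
The plan is to prove the two estimates separately; both follow from the $L_i$-smoothness in Assumption \ref{LS}, together with the elementary ``variance'' inequality already used in the proof of Lemma \ref{lem-1-7}.

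For \eqref{eq3-7} I would argue directly. By Assumption \ref{LS} and $L_i \leq L$, for each $i$ we have $\|\nabla f_i(x_i) - \nabla f_i(\bar{x})\| \leq L_i \|x_i - \bar{x}\| \leq L \|x_i - \bar{x}\|$. Squaring and summing over $i = 1, \dots, n$ gives
$$
\sum_{i=1}^n \|\nabla f_i(x_i) - \nabla f_i(\bar{x})\|^2 \leq L^2 \sum_{i=1}^n \|x_i - \bar{x}\|^2 = L^2 \|\mathbf{x} - \bar{\mathbf{x}}\|^2,
$$
which is \eqref{eq3-7}.

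For \eqref{eq3-8} I would first recall, as in the proof of Lemma \ref{lem-1-7}, that for any $a_1, \dots, a_n \in \mathbb{R}^d$ the average $\frac{1}{n}\sum_{l=1}^n a_l$ minimizes $x \mapsto \sum_{i=1}^n \|a_i - x\|^2$; evaluating at $x = 0$ yields $\sum_{i=1}^n \|a_i - \frac{1}{n}\sum_{l=1}^n a_l\|^2 \leq \sum_{i=1}^n \|a_i\|^2$. Taking $a_i = \nabla f_i(x_i)$ reduces \eqref{eq3-8} to bounding $\sum_{i=1}^n \|\nabla f_i(x_i)\|^2$. Next I would split
$$
\nabla f_i(x_i) = \big(\nabla f_i(x_i) - \nabla f_i(\bar{x})\big) + \big(\nabla f_i(\bar{x}) - \nabla f_i(x_*)\big) + \nabla f_i(x_*)
$$
and apply $\|a+b+c\|^2 \leq 3\|a\|^2 + 3\|b\|^2 + 3\|c\|^2$. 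The first resulting sum is $\leq 3L^2 \|\mathbf{x} - \bar{\mathbf{x}}\|^2$ by \eqref{eq3-7}; the second is $\leq 3L^2 \sum_i \|\bar{x} - x_*\|^2 = 3L^2 n \|\bar{x} - x_*\|^2 = 3L^2 \|\bar{\mathbf{x}} - \mathbf{x}_*\|^2$, again using $L_i$-smoothness with $L_i \leq L$ and the identity $\|\bar{\mathbf{x}} - \mathbf{x}_*\|^2 = n\|\bar{x}-x_*\|^2$; and the third is $\leq 3nD^2$ by the definition $D = \max_{1 \leq i \leq n}\|\nabla f_i(x_*)\|$. Adding these three bounds gives \eqref{eq3-8}.

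There is no real obstacle here; the only points requiring a little care are invoking the centering inequality to pass from the mean-subtracted sum to $\sum_i \|\nabla f_i(x_i)\|^2$, and keeping track of the factor $3$ from the three-term splitting so that the constants line up exactly with the stated right-hand side.
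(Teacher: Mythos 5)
Your proof is correct and follows essentially the same route as the paper: the direct smoothness bound for \eqref{eq3-7}, then the centering inequality $\sum_i \|a_i - \frac{1}{n}\sum_l a_l\|^2 \leq \sum_i \|a_i\|^2$ applied to $a_i = \nabla f_i(x_i)$, followed by the three-term splitting through $\nabla f_i(\bar{x})$ and $\nabla f_i(x_*)$ with the factor $3$. The only cosmetic difference is that the paper verifies the centering inequality by expanding the square directly, whereas you obtain it from the minimizing property of the mean evaluated at $x=0$; the two arguments are interchangeable.
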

\begin{proof}
By $L$-smoothness of a local cost function $f_i$, we have
\begin{equation*}
\sum^n_{i=1}\|  \nabla f_i({x}_i) - \nabla f_i(\bar{x})  \|^2 \leq L^2 \sum^n_{i=1}\|  {x}_i-\bar{x} \|^2,
\end{equation*}
which directly implies \eqref{eq3-7}.
Next, we prove \eqref{eq3-8}.  Note that for any $a=(a_1,\cdots,a_n)$ $\in\mathbb{R}^n$, we have 
\begin{equation*}
\begin{split}
\sum^n_{i=1}\left\|a_i -\frac{1}{n}\sum^n_{l=1}a_l\right\|^2&=\sum^n_{i=1}\left(\|a_i\|^2 -2\left\langle a_i, \frac{1}{n}\sum^n_{l=1}a_l\right\rangle +\frac{1}{n^2}\left\|\sum^n_{l=1}a_l\right\|^2\right)\\
&=\sum^n_{i=1} \|a_i\|^2 +\left(\frac{1}{n^2}-\frac{2}{n}\right)\left\|\sum^n_{l=1}a_l\right\|^2 \\
&\leq \sum^n_{i=1} \|a_i\|^2.
\end{split}
\end{equation*}
Using this, it follows that
\begin{equation}\label{eq-3-11}
\begin{split}
\sum_{i=1}^n \left\| \nabla f_i({x}_i)  - \frac{1}{n} \sum_{l=1}^n \nabla f_l ({x}_l)\right\|^2 &\leq \sum^n_{i=1}  \| \nabla f_i({x}_i(t))\|^2.
\end{split}
\end{equation}
By the triangle inequality, it follows that
\begin{equation*}
\begin{split}
\|\nabla f_i({x}_i)\|^2 &\leq 3\|\nabla f_i({x}_i) - \nabla f_i(\bar{x}) \|^2 + 3\|  \nabla f_i(\bar{x}) - \nabla f_i(x_*)\|^2 + 3\|\nabla f_i(x_*) \|^2.
\end{split}
\end{equation*}
This, together with $L$-smoothness of $f_i$, gives
\begin{equation*}
\sum^n_{i=1}\|\nabla f_i({x}_i)\|^2\leq 3L^2 \| \mathbf{x} - \bar{\mathbf{x}}\|^2 +3L^2 \| \bar{\mathbf{x}}- \mathbf{x}_*\|^2 + 3nD^2.
\end{equation*}
Combining this with \eqref{eq-3-11}, we have the desired estimate as follows:
\begin{equation*}
\begin{split}
\sum_{i=1}^n \left\| \nabla f_i({x}_i)  - \frac{1}{n} \sum_{l=1}^n \nabla f_l ({x}_l)\right\|^2 &\leq \sum^n_{i=1}  \| \nabla f_i({x}_i(t))\|^2.\\
&\leq 3L^2 \| \mathbf{x} - \bar{\mathbf{x}}\|^2 +3L^2 \| \bar{\mathbf{x}}- \mathbf{x}_*\|^2 + 3nD^2.
\end{split}
\end{equation*}
\end{proof}
  

\begin{lem}\label{lem-3-8}
Suppose that Assumptions \ref{LS} and \ref{sc} hold.  If a sequence $\{\alpha(t)\}_{t\geq0}$  satisfy $\alpha(t)\leq \frac{2}{L+\mu}$, then the sequence $\{x_i(t)\}_{t\geq0}$ generating by \eqref{scheme} for all $1\leq i \leq n$ satisfies the following inequality 
\begin{equation}\label{eq-3-50}
\begin{split}
& n\left\|\bar{x}(t)-x_* -\alpha(t)\left(\frac{1}{n}\sum^n_{i=1} \nabla f_i({x}_i(t)) -  \nabla f(x_*)\right) \right\|^2\\
&\quad \leq \bigg(1- \frac{\mu \alpha (t)}{2}\bigg)\|\bar{\mathbf{x}}(t) - \mathbf{x}_* \|^2 + \bigg(L^2\alpha(t)^2+\frac{4L^2\alpha(t)}{\mu}\bigg) \|\mathbf{x}(t)-\bar{\mathbf{x}}(t) \|^2.
\end{split}
\end{equation}
\end{lem}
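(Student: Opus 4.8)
\emph{Proof proposal.} Since $\nabla f = \frac1n\sum_{i=1}^n\nabla f_i$ and $\nabla f(x_*)=\frac1n\sum_{i=1}^n\nabla f_i(x_*)$, the plan is to split the vector appearing on the left of \eqref{eq-3-50} as
\[
\bar{x}(t)-x_*-\alpha(t)\Big(\tfrac1n\textstyle\sum_{i=1}^n\nabla f_i(x_i(t))-\nabla f(x_*)\Big)=A(t)-B(t),
\]
where $A(t):=\bar{x}(t)-x_*-\alpha(t)\big(\nabla f(\bar{x}(t))-\nabla f(x_*)\big)$ is the "centralized" descent step evaluated at $\bar x(t)$, and $B(t):=\frac{\alpha(t)}{n}\sum_{i=1}^n\big(\nabla f_i(x_i(t))-\nabla f_i(\bar{x}(t))\big)$ is a consensus-error term. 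Because the stepsize is non-increasing with $\alpha(0)\le\frac{2}{L+\mu}$, we have $\alpha(t)\le\frac{2}{L+\mu}$ for all $t$, so Lemma \ref{lem-1-4} applies and gives $n\|A(t)\|^2\le\big(1-\tfrac{\mu\alpha(t)}{2}\big)^2 n\|\bar{x}(t)-x_*\|^2=\big(1-\tfrac{\mu\alpha(t)}{2}\big)^2\|\bar{\mathbf{x}}(t)-\mathbf{x}_*\|^2$. For $B(t)$, Cauchy--Schwarz yields $\|B(t)\|^2\le\frac{\alpha(t)^2}{n}\sum_{i=1}^n\|\nabla f_i(x_i(t))-\nabla f_i(\bar{x}(t))\|^2$, and then \eqref{eq3-7} gives $n\|B(t)\|^2\le L^2\alpha(t)^2\|\mathbf{x}(t)-\bar{\mathbf{x}}(t)\|^2$.

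Next I would expand $n\|A(t)-B(t)\|^2\le n\|A(t)\|^2+2n\|A(t)\|\,\|B(t)\|+n\|B(t)\|^2$ and handle the cross term by Young's inequality $2uv\le\lambda u^2+\lambda^{-1}v^2$ applied with $u=\sqrt n\|A(t)\|$, $v=\sqrt n\|B(t)\|$, and the crucial choice $\lambda=\frac{\mu\alpha(t)}{4}$. This produces
\[
2n\|A(t)\|\,\|B(t)\|\le\tfrac{\mu\alpha(t)}{4}\big(1-\tfrac{\mu\alpha(t)}{2}\big)^2\|\bar{\mathbf{x}}(t)-\mathbf{x}_*\|^2+\tfrac{4L^2\alpha(t)}{\mu}\|\mathbf{x}(t)-\bar{\mathbf{x}}(t)\|^2 .
\]
Adding the three bounds, the coefficient of $\|\bar{\mathbf{x}}(t)-\mathbf{x}_*\|^2$ is $\big(1+\tfrac{\mu\alpha(t)}{4}\big)\big(1-\tfrac{\mu\alpha(t)}{2}\big)^2$, and the coefficient of $\|\mathbf{x}(t)-\bar{\mathbf{x}}(t)\|^2$ is exactly $L^2\alpha(t)^2+\tfrac{4L^2\alpha(t)}{\mu}$, which already matches the right-hand side of \eqref{eq-3-50}.

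Finally I would dispose of the first coefficient by the elementary estimate $\big(1+\tfrac{\mu\alpha(t)}{4}\big)\big(1-\tfrac{\mu\alpha(t)}{2}\big)=1-\tfrac{\mu\alpha(t)}{4}-\tfrac{\mu^2\alpha(t)^2}{8}\le 1$, together with $1-\tfrac{\mu\alpha(t)}{2}\ge 0$ (valid since $\mu\alpha(t)\le\frac{2\mu}{L+\mu}<2$), which gives $\big(1+\tfrac{\mu\alpha(t)}{4}\big)\big(1-\tfrac{\mu\alpha(t)}{2}\big)^2\le 1-\tfrac{\mu\alpha(t)}{2}$. Combining everything yields \eqref{eq-3-50}. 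There is no genuine obstacle here; the only point requiring care is the algebraic bookkeeping — in particular the Young parameter $\lambda=\mu\alpha(t)/4$, which is exactly what simultaneously creates the $\tfrac{4L^2\alpha(t)}{\mu}$ coefficient and preserves the contraction factor $1-\tfrac{\mu\alpha(t)}{2}$ — and the verification of the elementary inequality above under the stepsize restriction $\alpha(t)\le\frac{2}{L+\mu}$.
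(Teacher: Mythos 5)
Your proposal is correct and follows essentially the same route as the paper: the identical decomposition into the centralized step at $\bar{x}(t)$ plus the consensus-error term, the same use of Lemma \ref{lem-1-4} and estimate \eqref{eq3-7}, and the same Young parameter $\eta=\mu\alpha(t)/4$ (you apply Young to the cross term by hand, which is algebraically the same as the packaged inequality \eqref{eq3-11} used in the paper). The only difference is cosmetic: your factored verification of $(1+\tfrac{\mu\alpha}{4})(1-\tfrac{\mu\alpha}{2})^2\le 1-\tfrac{\mu\alpha}{2}$ replaces the paper's expanded computation, under the same condition $\mu\alpha(t)\le 2$.
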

\begin{proof} 
By Young's inequality, for any $u$ and $v$ in $\mathbb{R}^n$ we have
\begin{equation*}\label{eq3-11}
\|u+v\|^2 \leq (1+\eta) \|u\|^2 + \left( 1+ \frac{1}{\eta}\right) \|v\|^2
\end{equation*}
for all $\eta >0$. Using Young's inequality with $\eta = \mu\alpha(t)/4$, we obtain the following inequality:
\begin{equation*}
\begin{split}
& n\left\|\bar{x}(t)-x_* -\alpha(t) \left(\frac{1}{n}\sum^n_{i=1}\nabla f_i(x_i(t)) -  \nabla f(x_*)\right) \right\|^2 \\
&= n\left\|\bar{x}(t) - x_* - \alpha(t) \left(\nabla f(\bar{x}(t))-\nabla f(x_*)\right) + \alpha(t) \left( \nabla f(\bar{x}(t)) - \frac{1}{n}\sum^n_{i=1}\nabla f_i(x_i(t)) \right) \right\|^2\\
&\leq  n\left(1+\frac{\mu\alpha(t)}{4}\right)\|X(t)\|^2+ n\left(1+\frac{4}{\mu\alpha(t)}\right)\alpha(t)^2 \|Y(t)\|^2,
\end{split}
\end{equation*}
where 
\begin{equation*}
\begin{split}
X(t) &=\bar{x}(t)-x_* -\alpha(t)(\nabla f(\bar{x}(t))-\nabla f(x_*)\\
Y(t) &= \alpha(t)\left(\nabla f(\bar{x}(t))-\frac{1}{n}\sum^n_{i=1}\nabla f_i(x_i(t))\right).
\end{split}
\end{equation*}
Now we estimate the first term on the right-hand side of the last inequality. By Lemma \ref{lem-1-4}, it follows that
\begin{equation*}\label{eq3-15}
\begin{split}
\left(1+\frac{\mu\alpha(t)}{4}\right)\|X(t)\|^2 &=\left(1+\frac{\mu\alpha(t)}{4}\right)\left\|\bar{x}(t) - x_* - \alpha(t) \left(\nabla f(\bar{x}(t))-\nabla f(x_*)\right)\right\|^2 \\
&\leq \left(1+\frac{\mu\alpha(t)}{4}\right)\bigg(1-\frac{\mu\alpha(t)}{2} \bigg)^2\| \bar{x}(t)-x_*\|^2.
\end{split}
\end{equation*}
Note that since $\alpha (t) \leq \frac{2}{L+\mu}\leq \frac{2}{\mu}$, it follows that
\begin{equation*}
\begin{split}
\Big( 1+ \frac{\mu \alpha (t)}{4}\Big) \Big( 1- \mu \alpha (t) + \frac{\mu^2 \alpha (t)^2}{4}\Big)& = 1-\frac{3}{4} \mu \alpha (t) + \frac{\mu^3}{16}\alpha (t)^3
\\
& = 1- \frac{\mu \alpha (t) }{2} - \Big( \frac{\mu \alpha (t)}{4} - \frac{\mu^3 \alpha (t)^3}{16}\Big)
\\
&\leq 1- \frac{\mu \alpha (t)}{2}.
\end{split}
\end{equation*}
Using this result and the face that $\|\bx(t)-\x_*\|^2=n\|\bar{x}(t)-x_*\|^2$, the first term is bounded by
\begin{equation}\label{eq-A-8}
\begin{split}
n\left(1+\frac{\mu\alpha(t)}{4}\right)\|X(t)\|^2 &\leq \left(1- \frac{\mu \alpha (t)}{2}\right)n\| \bar{x}(t)-x_*\|^2\\
&= \left(1- \frac{\mu \alpha (t)}{2}\right)\| \bx(t)-\x_*\|^2
\end{split}
\end{equation}
Next, we estimate the second term. Note that by the Cauchy-Schwarz inequality and Lemma \ref{lem-1-5}, we get 
\begin{equation*}
\begin{split}
\left\| \nabla f(\bar{x}(t)) -\frac{1}{n}\sum^n_{i=1}\nabla f_i(x_i(t)) \right\|^2 &= \frac{1}{n^2}\left\|\sum^n_{i=1}\left(\nabla f_i(\bar{x}(t)) - \nabla f_i({x}_i(t)) \right) \right\|^2\\
&\leq \frac{1}{n}\sum^n_{i=1} \left\| \nabla f_i(\bar{x}(t)) - \nabla f_i({x}_i(t))\right\|^2\\
&\leq \frac{L^2}{n} \sum_{i=1}^n \|\bar{x}(t) - x_i (t)\|^2
\end{split}
\end{equation*}
Using this inequality, we have 
\begin{equation*}\label{eq3-16}
\begin{split}
\left(1+\frac{4}{\mu\alpha(t)}\right)\alpha(t)^2 \|Y(t)\|^2 &= \left(1+\frac{4}{\mu\alpha(t)}\right)\alpha(t)^2\left\| \nabla f(\bar{x}(t)) -\frac{1}{n}\sum^n_{i=1}\nabla f_i(x_i(t)) \right\|^2 \\
&\leq \left(1+\frac{4}{\mu\alpha(t)}\right)\frac{L^2\alpha (t)^2}{n} \sum_{i=1}^n \|\bar{x}(t) - x_i (t)\|^2.
\end{split}
\end{equation*}
Therefore, using the fact that $\|\x(t)-\bx(t)\|^2 = \sum^n_{i=1}\|x_i(t)-\bar{x}(t)\|^2$, the second term is bounded by
\begin{equation}\label{eq-A-9}
\begin{split}
n\left(1+\frac{4}{\mu\alpha(t)}\right)\alpha(t)^2 \|Y(t)\|^2 
&\leq \left(1+\frac{4}{\mu\alpha(t)}\right)L^2\alpha (t)^2 \sum_{i=1}^n \|x_i (t)-\bar{x}(t)\|^2 \\
&= \left(1+\frac{4}{\mu\alpha(t)}\right)L^2\alpha (t)^2\|\x(t)-\bx(t)\|^2.
\end{split}
\end{equation}
Combining \eqref{eq-A-8} and \eqref{eq-A-9}, it follows that 
\begin{equation*}
\begin{split}
 &n \left\|\bar{x}(t)-x_* -\frac{\alpha(t)}{n}\sum^n_{l=1} \left(\nabla f_l({x}_l(t)) -  \nabla f_l(x_*)\right) \right\|^2\\
 &\leq\bigg(1+\frac{\mu\alpha(t)}{4}\bigg)\bigg(1-\frac{\mu\alpha(t)}{2} \bigg)^2\|\bar{\mathbf{x}}(t) - \mathbf{x}_* \|^2 + \bigg(1+\frac{4}{\mu\alpha(t)}\bigg)L^2\alpha(t)^2 \|\mathbf{x}(t) - \bar{\mathbf{x}}(t) \|^2\\
 &\leq \left(1- \frac{\mu \alpha (t)}{2}\right) \|\bar{\mathbf{x}}(t) - \mathbf{x}_* \|^2 + \left(L^2\alpha(t)^2+\frac{4L^2\alpha(t)}{\mu}\right) \|\mathbf{x}(t) - \bar{\mathbf{x}}(t) \|^2,
\end{split}
\end{equation*}
which completes the proof.
\end{proof}

\begin{lem}\label{lem-1-1}
Suppose that the mixing matrix $W$ satisfies the Assumption \ref{ass-1-1}. Then, for any $x = (x_1, \cdots, x_n) \in \mathbb{R}^{d\cdot n}$ we have
\begin{equation*}
\sum_{i=1}^n \left\| \sum_{j=1}^n w_{ij} (x_j - \bar{x}) \right\|^2 \leq \beta^2 \sum_{i=1}^n \|x_i - \bar{x}\|^2,
\end{equation*}
where $\bar{x} = \frac{1}{n}\sum^{n}_{i=1} x_i$.
\end{lem}
\begin{proof}
We refer to Lemma 1 in \cite{PN}.
\end{proof} 

\subsection{Proof of Propositions \ref{prop-3-1} and \ref{prop-3-2}}\label{secA-3}
We are now prepared to prove   Propositions \ref{prop-3-1} and \ref{prop-3-2}. In this subsection, we used the following constants.
\begin{equation}\label{eq-A-12}
c_1 := 3L^2\bigg(1+\frac{1}{\delta}\bigg),\ c_2 := 3nD^2\bigg(1+\frac{1}{\delta}\bigg),\ c_3 := c_1 + L^2 , \ c_4 := \frac{4L^2}{\mu}.
\end{equation}
It is worth noting that the constants in Propositions \ref{prop-3-1} and \ref{prop-3-2}, denoted as $c_1,c_2,c_3,$ and $c_4$, correspond to the case where $\delta = \frac{1}{\beta}-1>0$ in \eqref{eq-A-12}. With this choice of $\delta$, we have the equality $(1+\delta)\beta^2 = \beta$.
\begin{proof}[Proof of Proposition \ref{prop-3-1}]
We recall Young's inequality, which is useful for our derivations: For any vectors $u$ and $v$ in $\mathbb{R}^d$, we have
\begin{equation*}
\|u+v\|^2 \leq (1+\delta) \|u\|^2 + \left( 1+ \frac{1}{\delta}\right) \|v\|^2
\end{equation*}
Using the DPG algorithm in \eqref{scheme}, we can write $\|x_i(t+1) - \bar{x}(t+1)\|^2$ as 
$$
\left\|\mathcal{P}_{\Omega}\bigg[\sum^n_{j=1}w_{ij}x_j(t) -\alpha(t)\nabla f_i({x}_i(t))\bigg] - \frac{1}{n}\sum^n_{k=1}\mathcal{P}_{\Omega}\bigg[\sum^n_{j=1}w_{kj}x_j(t) -\alpha(t)\nabla f_i({x}_k(t))\bigg]\right\|^2.
$$
Summing this $i=1$ to $n$ and applying Lemma \ref{lem-1-7}, it follows that 
\begin{equation}\label{eq-a-12}
\begin{split}
&\sum^n_{i=1} \|x_i(t+1) - \bar{x}(t+1)\|^2\\
&\leq \sum^{n}_{i=1}\left\| \sum^n_{j=1}w_{ij}\left(x_j(t) - \bar{x}(t)\right) -\alpha(t)\nabla f_i({x}_i(t)) + \frac{\alpha(t)}{n}\sum^n_{l=1}\nabla f_l({x}_l(t)) \right\|^2.
\end{split}
\end{equation}
where Applying Young's inequality with 
$$
u = \sum^n_{j=1}w_{ij}\left(x_j(t) - \bar{x}(t)\right),\ v=\alpha(t)\left(\nabla f_i({x}_i(t)) + \frac{1}{n}\sum^n_{l=1}\nabla f_l({x}_l(t))\right),
$$ the last term is bounded as
$$
(1+\delta)\left\|W(\mathbf{x}(t)-\bar{\mathbf{x}}(t))\right\|^2 + \sum^n_{i=1} \left(1+\frac{1}{\delta}\right)\alpha(t)^2 \left\| \nabla f_i({x}_i(t)) + \frac{1}{n}\sum^n_{l=1}\nabla f_l({x}_l(t))  \right\|^2.
$$
Here we used the fact that $\sum^n_{i=1}\|x_i(t)-\bar{x}(t)\|^2 =  \|\x(t) - \bx(t)\|^2$.  Now, applying Lemma \ref{lem-1-1}, we bound the first term as:
$$
(1+\delta)\beta^2\|\mathbf{x}(t)- \bar{\mathbf{x}}(t)\|^2,
$$
Using Lemma \ref{lem-1-5}, the second term is bounded as
$$
\left(1+\frac{1}{\delta}\right)\alpha(t)^2\left(3L^2 \| \mathbf{x}(t) - \bar{\mathbf{x}}(t)\|^2  + 3L^2 \| \bar{\mathbf{x}}(t)- \mathbf{x}_*\|^2 +3nD^2\right).
$$
Putting all together, we obtain the desired estimate:
\begin{equation*}
\begin{split}
&\sum^n_{i=1} \|x_i(t+1) - \bar{x}(t+1)\|^2 \\
\leq&(1+\delta)\left\|W(\mathbf{x}(t)-\bar{\mathbf{x}}(t))\right\|^2 + \sum^n_{i=1} \left(1+\frac{1}{\delta}\right)\alpha(t)^2 \left\| \nabla f_i({x}_i(t)) + \frac{1}{n}\sum^n_{l=1}\nabla f_l({x}_l(t))  \right\|^2\\
\leq &\left( 3L^2\left(1+\frac{1}{\delta}\right)\alpha(t)^2  + (1+\delta)\beta^2\right)\|\mathbf{x}(t)- \bar{\mathbf{x}}(t)\|^2  \\
&\qquad  \qquad\qquad \qquad  \qquad +3L^2\left(1+\frac{1}{\delta}\right)\alpha(t)^2 \| \bar{\mathbf{x}}(t)- \mathbf{x}_*\|^2 + 3nD^2\left(1+\frac{1}{\delta}\right)\alpha(t)^2\\
=&(c_1\alpha(t)^2 + (1+\delta)\beta^2)\| \mathbf{x}(t) - \bar{\mathbf{x}}(t)\|^2 + c_1\alpha(t)^2 \|\bar{\mathbf{x}}(t)-\mathbf{x}_*\|^2 + c_2\alpha(t)^2,
\end{split}
\end{equation*}
which completes the proof.
\end{proof}  
\begin{proof}[Proof of Proposition \ref{prop-3-2}]
By the DPG algorithm and using the contraction property of the projection operator in Lemma \ref{lem-1-2}, we deduce
\begin{equation*}
\begin{split}
\| x_i(t+1) - x_* \|^2&= \bigg\| \mathcal{P}_\Omega\bigg[\sum^n_{j=1}w_{ij} x_j(t) -\alpha(t)\nabla f_i({x}_i(t))\bigg] - \mathcal{P}_\Omega\left[ x_* -\alpha(t)\nabla f(x_*)\right]\bigg\|^2 \\
&\leq \bigg\|\sum^n_{j=1}w_{ij} x_j(t) -x_* -\alpha(t)\left(\nabla f_i({x}_i(t))-\nabla f(x_*)\right)\bigg\|^2.
\end{split}
\end{equation*}
For the first equality, we used the fact that $x_* = \mathcal{P}_\Omega\left[ x_* -\alpha(t)\nabla f(x_*)\right]$, which follows from $x_* = \arg\min_{x\in\Omega} f(x)$.
Summing up the above inequality from $i=1$ to $n$, we have
\begin{equation}\label{relation}
\sum^{n}_{i=1}\| x_i(t+1) - x_* \|^2  \leq \sum^{n}_{i=1}\bigg\|\sum^n_{j=1}w_{ij} x_j(t) -x_*-\alpha(t)\left(\nabla f_i({x}_i(t))-\nabla f(x_*)\right) \bigg\|^2.
\end{equation}
Denoting $P(t)$ as
$$
P(t) = \bar{x}(t) -\frac{\alpha(t)}{n}\sum^n_{l=1}\nabla f_l(x_l(t)),
$$
we find the following identity of the right-hand side of \eqref{relation}:
\begin{equation}\label{relation2}
\begin{split}
 &  \sum^{n}_{i=1}\bigg\|\sum^n_{j=1}w_{ij} x_j(t) -x_*-\alpha(t) \left(\nabla f_i({x}_i(t))-\nabla f(x_*)\right) \bigg\|^2\\
 &= \sum^{n}_{i=1} \bigg\|P(t)-(x_* -\alpha(t)  \nabla f(x_*))  -P(t) + \sum^n_{j=1}w_{ij}x_j(t)  -\alpha(t)\nabla f_i({x}_i(t)) \bigg\|^2\\
&= n\bigg\|P(t)-\left(x_* -\alpha(t)  \nabla f(x_*)\right)\bigg\|^2 + \sum^{n}_{i=1} \bigg\|\sum^n_{j=1}w_{ij}x_j(t)  -\alpha(t)\nabla f_i({x}_i(t)) - P(t) \bigg\|^2.
\end{split}
\end{equation} 
For the last equality, we used
\begin{equation*}
\begin{split}
&\sum^n_{i=1}\Big(\sum^n_{j=1}w_{ij}x_j(t)  -\alpha(t)\nabla f_i({x}_i(t)) - P(t)\Big) \\
&=\sum^n_{j=1}x_j(t) - \alpha(t)\sum^n_{i=1}\nabla f_i(x_i(t)) - n P(t)= 0,
\end{split}
\end{equation*}
and the fact that $\sum^n_{i=1}\|a + b_i\|^2 = n\|a\|^2 + \sum^n_{i=1}b_i$ if $\sum^n_{i=1}b_i =0$. 
Now we estimate the first term on the right-hand side of the last equality in \eqref{relation2} as follows:
\begin{equation*}
\begin{split}
&n\bigg\|P(t)-\left(x_* -\alpha(t)  \nabla f(x_*)\right)\bigg\|^2\\
&= n\left\|\bar{x}(t)-x_* -\alpha(t)\left(\frac{1}{n}\sum^n_{i=1} \nabla f_i({x}_i(t)) -  \nabla f(x_*)\right) \right\|^2\\
&\leq
\bigg(1- \frac{\mu \alpha (t)}{2}\bigg)\|\bar{\mathbf{x}}(t) - \mathbf{x}_* \|^2 + \bigg(L^2\alpha(t)^2+\frac{4L^2\alpha(t)}{\mu}\bigg) \|\bar{\mathbf{x}}(t) - \mathbf{x}(t)\|^2,
\end{split}
\end{equation*}
where we applied Lemma \ref{lem-3-8} for the last inequality. 

Now we estimate the second term on the right-hand side of the last equality in \eqref{relation2}, which is equal to the right-hand side of \eqref{eq-a-12}. Therefore, we can apply the estimate for \eqref{eq-a-12} to the second term in the last equality of \eqref{relation2} and obtain
\begin{equation*}
\begin{split}
&\sum^{n}_{i=1} \bigg\|\sum^n_{j=1}w_{ij}x_j(t)  -\alpha(t)\nabla f_i({x}_i(t)) - P(t) \bigg\|^2\\
&=\sum^{n}_{i=1}\bigg\|  \sum^n_{j=1}w_{ij}\Big(x_j(t) - \bar{x}(t)\Big) -\alpha(t)\Big(\nabla f_i({x}_i(t))- \frac{1}{n}\sum^n_{l=1}\nabla f_l(x_l(t))\Big)\bigg\|^2\\
&\leq (c_1 \alpha (t)^2 + (1+\delta)\beta^2)\| \mathbf{x}(t) - \bar{\mathbf{x}}(t)\|^2 +c_1 \alpha (t)^2 \|\bar{\mathbf{x}}(t)-\mathbf{x}_*\|^2 + c_2 \alpha(t)^2.
\end{split}
\end{equation*}
Putting the above two estimates in the last term of \eqref{relation2}, we get
\begin{equation*}
\begin{split}
\sum^{n}_{i=1}\| x_i(t+1) - x_* \|^2&= \|\mathbf{x}(t+1) -\bar{\mathbf{x}}(t+1)\|^2 + \|\bar{\mathbf{x}}(t+1) - \mathbf{x}_*\|^2.\\
& \leq\bigg(\Big( c_1 + L^2\Big) \alpha (t)^2 +\frac{4L^2\alpha(t)}{\mu} + \tilde{\beta}\bigg) \| \mathbf{x}(t) - \bar{\mathbf{x}}(t)\|^2
\\
&\qquad \qquad + \bigg(1- \frac{\mu \alpha (t)}{2} + c_1 \alpha(t)^2\bigg)\|\bar{\mathbf{x}}(t)-\mathbf{x}_*\|^2 + c_2 \alpha(t)^2,
\end{split}
\end{equation*}
which completes the proof.
\end{proof}
\bibliographystyle{siamplain}
\bibliography{references}

\end{document}